\newtheorem{theorem}{Theorem}
\newtheorem{lemma}{Lemma}
\newtheorem{corollary}{Corollary}
\newtheorem{definition}{Definition}
\newtheorem{proposition}{Proposition}
\newcommand{\vp}{\varphi^{(\alpha)}_{n,c}}
\newcounter{reh}
\begin{document}

\begin{center}
{\large {\bf  The finite Hankel transform operator: Some explicit and local estimates
of the eigenfunctions and eigenvalues decay rates.}}

\vskip 0.5cm Mourad Boulsane and Abderrazek Karoui {\footnote{
Corresponding author: Abderrazek Karoui, Email: abderrazek.karoui@fsb.rnu.tn
This work was supported by the DGRST research Grant UR13ES47 and the project CMCU PHC Utique 15G1504.}}
\end{center}
\vskip 0.25cm {\small
University of Carthage,
Department of Mathematics, Faculty of Sciences of Bizerte, Jarzouna, 7021, Tunisia.}\\

\noindent{\bf Abstract}--- For fixed real numbers $c>0,$ $\alpha>-\frac{1}{2},$ the finite Hankel transform operator, denoted
by $\mathcal{H}_c^{\alpha}$ is given by  the  integral operator defined
on $L^2(0,1)$ with kernel $K_{\alpha}(x,y)= \sqrt{c xy} J_{\alpha}(cxy).$  To the operator $\mathcal{H}_c^{\alpha},$
we associate a positive, self-adjoint compact integral operator $\mathcal Q_c^{\alpha}=c\, \mathcal{H}_c^{\alpha}\, \mathcal{H}_c^{\alpha}.$ Note that the integral operators $\mathcal{H}_c^{\alpha}$ and $\mathcal Q_c^{\alpha}$ commute with a Sturm-Liouville differential operator $\mathcal D_c^{\alpha}.$ 
 In this paper, we first give some useful estimates and bounds of the eigenfunctions $\vp$ of $\mathcal H_c^{\alpha}$ or $\mathcal Q_c^{\alpha}.$ These estimates and bounds are obtained by using some special techniques from the theory of Sturm-Liouville operators, that we apply to the differential operator $\mathcal D_c^{\alpha}.$  If 
 $(\mu_{n,\alpha}(c))_n$ and $\lambda_{n,\alpha}(c)=c\, |\mu_{n,\alpha}(c)|^2$ denote the infinite and countable sequence of the eigenvalues of the operators $\mathcal{H}_c^{(\alpha)}$ and $\mathcal Q_c^{\alpha},$ arranged in the decreasing order of their magnitude, then we show an unexpected result that for a given integer $n\geq 0,$ $\lambda_{n,\alpha}(c)$ is decreasing with respect to the parameter $\alpha.$
As a consequence, we show that for $\alpha\geq \frac{1}{2},$ the $\lambda_{n,\alpha}(c)$ and the $\mu_{n,\alpha}(c)$ have a super-exponential decay rate. Also, we give a lower decay rate of these eigenvalues. As it will be seen, the previous results are essential tools for the analysis of a spectral approximation scheme based on the eigenfunctions of the finite Hankel transform operator. Some numerical examples will be provided to illustrate the results of this work.\\

\noindent {{\bf 2010 Mathematics Subject Classification.}} Primary
42C10, 65L70. Secondary 41A60, 65L15.\\

\noindent {\it  Key words and phrases.} Finite Hankel transform operator, Sturm-Liouville operator,  eigenfunctions and eigenvalues, prolate spheroidal wave functions, approximation of Hankel band-limited functions.\\

\section{Introduction}

We first recall that for a bandwidth $c>0$ and a real number $\alpha >-1/2,$ the circular prolate spheroidal wave functions (CPSWFs),
denoted by $(\vp)_{n\geq 0}$  are the different eigenfunctions of the following finite Hankel transform operator, see for example \cite{KM2, Slepian3}
\begin{equation}\label{finite_Hankel}
\mathcal{H}_c^{\alpha} (f)(x)=\int_0^1 \sqrt{ctx} J_{\alpha}(ct x) f(t)\, dt,\quad x\in [0,1],
\end{equation}
that is
\begin{equation}\label{integral_eq1}
\mathcal{H}_c^{\alpha} \vp(x)\, dy =\int_0^1 \sqrt{cxy} J_{\alpha}(cxy) \vp(y)\, dy= \mu_{n,\alpha}(c)\vp(x),\quad n\geq 0,\quad x\in [0,1].
\end{equation}
Here, $J_{\alpha}$ is the Bessel function of the first type and order $\alpha >-1/2.$
We recall that the Hankel transform is defined on $L^2(0,\infty)$ by
\begin{equation}\label{Hankel_transform}
\mathcal H^{\alpha} (f)(x)= \int_0^{+\infty} \sqrt{xy} J_{\alpha}(xy) f(y)\, dy.
\end{equation}
Moreover, for
$c>0,$ the Hankel Paley-Wiener space is the space of functions from $L^2(0,\infty),$ having compactly supported Hankel transforms, that is
\begin{equation}
\label{Paley_space}
\mathcal B_c^{\alpha}=\{ f\in L^2(0,\infty);\,\ \mbox{Supp} \mathcal H^{\alpha}(f)\subset [0,c]\}.
\end{equation}

Although,  in the literature, there exist  extensive works devoted to the numerical computation
of the CPSWFs and their associated eigenvalues $\mu_{n,\alpha}(c),$ see for example \cite{Tatiana1,KM2,Tatiana2,Slepian3}, the subject of the explicit estimates and bounds of the $\vp,$ as well as the decay rate of the eigenvalues  $\mu_{n,\alpha}(c)$ or $\lambda_{n,\alpha}(c),$ is still unexplored. Our objective from this work is to
provide the reader with some useful explicit  local estimates and bounds of the CPSWFs, as well as some
explicit lower and upper bounds of the eigenvalues $\mu_{n,\alpha}(c).$\\

We first mention that the interest from the study of the eigenfunctions of the finite Hankel transform (CPSWFs)  and in general the prolate spheroidal wave functions, comes from the fact they are widely used in various scientific area, such as applied mathematics, physics, engineering,
see \cite{Hogan} for some of these  concrete applications.

In the pioneer work \cite{Slepian3}, D. Slepian has shown that the compact integral operator $\mathcal{H}_c^{\alpha}$ commutes with the following differential operator $\mathcal D^{\alpha}_c$ defined on $C^2([0,1])$ by
\begin{equation}
\label{differ_operator1}
\mathcal D^{\alpha}_c (\phi)(x)  = \dfrac{d}{dx} \left[ (1-x^2)\dfrac{d}{dx} \phi(x) \right] + \left( \dfrac{\dfrac{1}{4}-\alpha^2}{x^2}-c^2x^2 \right)\phi(x).
\end{equation}
Hence,  $\vp$ is the $n-$th order bounded   eigenfunction of the operator $-\mathcal D^{\alpha}_c,$  associated with the eigenvalue $\chi_{n,\alpha}(c),$ that is
\begin{equation}
\label{differ_operator2}
-\dfrac{d}{dx} \left[ (1-x^2)\dfrac{d}{dx} \vp(x) \right] - \left( \dfrac{\dfrac{1}{4}-\alpha^2}{x^2}-c^2x^2 \right)\vp(x)=\chi_{n,\alpha}(c)\vp(x),\quad x\in [0,1].
\end{equation}

In this work, we  take advantage from the commutativity property of the operators $\mathcal D^{\alpha}_c$ and $\mathcal{H}_c^{\alpha}$ and prove some useful local estimates and bounds of the $\vp(x),\, x\in I.$
Note that some estimates and bounds of the classical prolate spheroidal wave functions and their generalized versions, were already given in \cite{Bonami-Karoui1, Karoui-Souabni}. Nonetheless, in our present case of the CPSWFs, the techniques used in the previous references have to be modified and combined with new techniques based on the use of the Sturm-Liouville comparison theorem and Butlewski's theorem. These new techniques are needed in order to handle 
the extra difficulty caused by the  singularity at $x=0,$ appearing in the  differential operator $\mathcal D^{\alpha}_c.$  Also, by using the characterization of the eigenvalues $\lambda_{n,\alpha}(c)$ in terms of an energy maximization problem, combined with Griffith's theorem which is a Paley-Wiener theorem for the Hankel transform,  we prove an interesting result that the 
$\lambda_{n,\alpha}(c)$ are decreasing with respect to the parameter $\alpha.$ As a consequence, and by using the sharp decay rate of the eigenvalues of the finite Fourier transform, given in \cite{Bonami-Karoui2}, we give a super-exponential decay rate of the $\lambda_{n,\alpha}(c),$ for $\alpha\geq \frac{1}{2}.$\\

This work is organised as follows. In section 2, we give some mathematical preliminaries related to the properties and computation of the CPSWFs.  In section 3, we first provide a local estimate for the $\vp(x).$  Then, we give a bound of
$|\vp(x)|$ for $x\in [0,1].$ The previous results are obtained under the condition that $\chi_{n,\alpha}(c)> c^2+\alpha^2-\frac{1}{4},$ where $\chi_{n,\alpha}(c)$ is  the $n-$th eigenvalues 
$\chi_{n,\alpha}(c)$ of the differential operator $\mathcal Q_c^{\alpha}.$ By using the classical Strurm-Liouville comparison theorem, we prove that $\chi_{n,\alpha}(c)$ passes through $c^2+\alpha^2-\frac{1}{4}$ when $n$ is around 
$\frac{c}{\pi}.$ 
In section 4, we give an upper and a lower  bound of the super-exponential decay rate of the eigenvalues $\lambda_{n,\alpha}(c).$  Finally, in section 5, we provide the reader with some numerical examples  that illustrate the different results of this work. Moreover, in this section, we also  show that the $\vp$ are well adapted for the approximation of Hankel band-limited and almost band-limited functions.

\section{Mathematical preliminaries.}

In this section, we first give a brief description of the computation and the decay rate of the
 series expansion coefficients $d_k^n$ of  the eigenfunctions $\vp$ in an appropriate  basis of $L^2([0,1]).$ This basis is
 given by the orthogonal functions  $\widetilde T_{k,\alpha}(x)= x^{\alpha+1/2} (-1)^k \sqrt{2(2k+\alpha+1)}
P_k^{(\alpha,0)}(1-2x^2),\, k\geq 0.$ Here,  $P_k^{(\alpha,\beta)}$ is the Jacobi polynomial of degree $k$ and parameters $\alpha, \beta >-1,$ normalized so that
${ P_k^{(\alpha,\beta)}(1)={n+\alpha \choose n}=\frac{\Gamma(n+\alpha+1)}{\Gamma(\alpha+1)\Gamma(n+1)}.}$
Then, we relate the eigenvalues of the compact and positive operator
\begin{equation}
\label{Integral_Operator2}
\mathcal Q_c^{\alpha} = c \mathcal H_c^{\alpha} (\mathcal H_c^{\alpha})^*=c \mathcal H_c^{\alpha} (\mathcal H_c^{\alpha})
\end{equation}
to the solutions of a classical energy maximization problem over the Paley-Wiener space $\mathcal B_c^{\alpha},$ given by
\eqref{Paley_space}.

Note that thanks to the  important  commutativity property of the differential and integral  operators $\mathcal D^{\alpha}_c$ and $\mathcal H_c^{\alpha},$ D. Slepian has developed in \cite{Slepian3}, an efficient computational scheme for the $\vp(x),\, x\geq 0,$ as well as for their corresponding eigenvalues
$\chi_{n,\alpha}(c)$ and $\mu_{n,\alpha}(c).$  The Slepian scheme for the computation of
$\vp(x),$ is given by the following series expansion,
 \begin{equation}\label{expansion1}
\vp(x)=\sum_{k=0}^{+ \infty} d_{k}^n \; \widetilde T_{k,\alpha}(x),\quad \widetilde T_{k,\alpha}(x)=
(-1)^k \sqrt{2(2k+\alpha+1)} x^{\alpha+\frac{1}{2}} P_k^{(\alpha,0)}(1-2 x^2),\quad x\in [0,1].
 \end{equation}
Here, $d_{k}^n$ are the expansion coefficients, given as the eigenvectors a tri-diagonal infinite order matrix.  Moreover, by combining  the integral equation \eqref{integral_eq1} and the previous expansion, D. Slepian has given the following analytic extension of the  $\vp,$ over the unbounded interval $[1,+\infty),$
\begin{equation}\label{expansion2}
\vp(x)=\frac{1}{\mu_{n,\alpha}(c)}\sum_{k\geq 0}
(-1)^k d_{k}^{n}\sqrt{2(2k+\alpha+1)} \frac{J_{2k+\alpha+1}(cx)}{\sqrt{cx}},\quad
x\geq 1.
\end{equation}
By evaluating  the two expansions \eqref{expansion1} and \eqref{expansion2}  at $x=1,$ one gets the following expression of the eigenvalues $\mu_{n,\alpha}(c),$
\begin{equation}\label{Eigenvals1}
\mu_{n,\alpha}(c)=\frac{1}{\sqrt{c}}\frac{\sum_{k\geq 0}(-1)^k
d_{k}^{n}\sqrt{2(2k+\alpha+1)} J_{2k+\alpha+1}(c)}{\sum_{k=0}^{+ \infty} d_{k}^n \; \widetilde T_{k,\alpha}(1)}.
\end{equation}

In this work, we  check that for a fixed positive integer $n,$ the sequence
$(d_k^n)_{k\geq 0}$ has a super-exponential decay rate. Consequently, the previous formulae for computing the $\vp$ and their eigenvalues $\mu_{n,\alpha}(c)$ are practical and highly accurate. Note that by using
 a slightly modified techniques of those used in \cite{Slepian3}, one can easily check that  the expansion coefficients can be computed by solving  the following tri-diagonal system
\begin{eqnarray}\label{Eq1.14}
\sum_{k\geq 0}\left[  c^{2} a_{k,\alpha} d_{k-1}^n +\left(( 2k+\alpha+\dfrac{1}{2})( 2k+\alpha+\dfrac{3}{2})+c^{2} b_{k,\alpha}\right) \, d_k^n+ c^{2} a_{k+1,\alpha} d_{k+1}^n\right]\widetilde T _{k,\alpha}(x)&&  \nonumber \\
\qquad =  \chi_{n,\alpha}(c) \sum_{k\geq 0} d_{k}^n \; \widetilde T_{k,\alpha},
\end{eqnarray}
where
\begin{equation}\label{Eq1.12}
a_{k+1,\alpha}=\dfrac{(k+1)(\alpha+k+1)}{(\alpha+2k+2) \sqrt{\alpha+2k+1}\sqrt{\alpha+2k+3})},\quad
b_{k,\alpha}=\dfrac{1}{2} \left(\dfrac{\alpha^2}{(\alpha+2k+2)(\alpha+2k)}+1 \right).
\end{equation}
The previous system can be written in the following eigensystem
\begin{equation}
\label{eigensystem}
 M  D  = \chi_n   D,\quad M=[m_{i,j}]_{i,j\geq 0},\quad D= [d_k^n]^T_{k\geq 0}
\end{equation}
with  $m_{k,j}=0,\,\, \mbox{ if } \mid{k-j}\mid \geq 2$ and
\begin{equation}\label{1.23}
m_{k,k-1}=c^{2} a_{k,\alpha},\quad m_{k,k}= (\alpha+2k+\dfrac{1}{2})(\alpha+2k+\dfrac{3}{2})+c^{2} b_{k,\alpha},\quad
m_{k,k+1}=m_{k,k+1}.
\end{equation}
Also, note that the expansion coefficients
$(d_{k}^n)_k$ are related to $\vp$ by the following relation,
\begin{equation}\label{Eq1.9}
 d_k^n=\dfrac{(-1)^k\sqrt{2(2k+\alpha+1)}}{\mu_{n,\alpha}(c)}  \int_{0}^{1}{\vp(y)  \dfrac{J_{2k+\alpha+1}(y)}{\sqrt{cy}} dy}.
\end{equation}
In fact, from \eqref{integral_eq1}, we have
\begin{eqnarray*}
d_{k}^n &=& \dfrac{1}{\mu_{n,\alpha}(c)}\int_{0}^{1}{ \widetilde T_{k,\alpha}(x) \; \int_{0}^{1}{\sqrt{cxy} J_{\alpha}(cxy) \vp(y)dy}  dx}\\
&=& \dfrac{1}{\mu_{n,\alpha}(c)}\int_{0}^{1}{\vp(y)  \; \int_{0}^{1}{\sqrt{cxy} J_{\alpha}(cxy) \widetilde {T_{k,\alpha}}(x)}  dx dy}
\end{eqnarray*}
Also, from \cite{KM2, Slepian3}, one has
\begin{equation}\label{Eq1.8}
\int_{0}^{1}{\sqrt{cxy} J_{\alpha}(cxy) \widetilde T_{k,\alpha}(x)dx}= (-1)^k \sqrt{2(2k+\alpha+1)} \dfrac{J_{2k+\alpha+1}(y)}{\sqrt{cy}}.
\end{equation}
By combining the previous two equalities, one gets \eqref{Eq1.9}.

It is well known that the eigenvalues $\chi_{n,\alpha}(c)$ of the differential operator $\mathcal D^{\alpha}_c$ satisfy the differential equation
 \begin{equation}\label{differ_chi}
 \dfrac{\partial\chi_{n,\alpha}(c)}{\partial c}= 2c \int_{0}^{1} x^2\left(\vp(x)\right)^2\, dx.
  \end{equation}
Since $\chi_{n,\alpha}(0)=(2n+\alpha+\dfrac{1}{2})(2n+\alpha+\dfrac{3}{2})$ is the $n-$th eigenvalue of the differential operator $\mathcal D^{\alpha}_0,$ and
since $0\leq c^2x^2 \leq c^2$ for all $x\in [0,1]$ , then by using the Min-Max principle,
the $n-$th eigenvalue  $\chi_{n,\alpha}(c)$ of the differential operator $\mathcal D^{\alpha}_c$ satisfies the following bounds,
\begin{equation}\label{bounds_chi}
\left(2n+\alpha+\dfrac{1}{2}\right)\left(2n+\alpha+\dfrac{3}{2}\right) \leq \chi_{n,\alpha}(c)  \leq \left(2n+\alpha+\dfrac{1}{2}\right)\left(2n+\alpha+\dfrac{3}{2}\right)+ c^2.
\end{equation}

Next, we briefly check a classical result  that the eigenvalues $\lambda_{n,\alpha}(c)$ of the integral operator $\mathcal Q_c^{\alpha},$ given by \eqref{Integral_Operator2} are characterized as the solutions of an energy maximization problem over the Hankel Paley-Wiener space  $\mathcal B_c^{\alpha}.$ In fact, from [\cite{Watson}, p.154], we have
\begin{equation}\label{kernel}
G_{\alpha}(x,y)=\int_0^1 \sqrt{xy} J_{\alpha}(xt)J_{\alpha}(yt) t \, dt = \frac{\sqrt{xy}}{x^2-y^2}\left(x J_{\alpha+1}(x) J_{\alpha}(y)-y J_{\alpha+1}(y) J_{\alpha}(x)\right).
\end{equation}
On the other hand, by using the previous identity and since $\mathcal H_c^{\alpha}$ is self-adjoint, then a straightforward computation, gives us
\begin{equation}
\mathcal Q_c^{\alpha}(f)(x)= c\, \mathcal H_c^{\alpha} \mathcal H_c^{\alpha}(f)(x)= c\int_0^1 G_{\alpha}(cx, cy) f(y)\, dy,
\end{equation}
where the kernel $G_{\alpha}(x,y)$ is given by \eqref{kernel}.  On the other hand, since the Hankel transform operator is its own inverse and since by Plancherel formula, we have for $f\in \mathcal B_c^{\alpha},$
$${\displaystyle
\|f\|^2_{L^2(0,\infty)}=\|\mathcal H^{\alpha}f\|^2_{L^2(0,\infty)}=c \int_0^1 (\mathcal H^{\alpha}(f))^2(c x)\, dx,}$$
then, for $f\in \mathcal B_c^{\alpha},$ we have
\begin{eqnarray*}
\frac{\|f\|^2_{L^2(0,1)}}{\|f\|^2_{L^2(0,\infty)}}&=&\frac{\int_0^1 (f(t))^2\, dt}{\|f\|^2_{L^2(0,\infty)}}= \frac{\int_0^1 \left(\int_0^c \sqrt{X t} J_{\alpha}(X t)\mathcal H^{\alpha}(f)(X)\, dX\right) \cdot \left(\int_0^c \sqrt{Y t} J_{\alpha}(Yt)\mathcal H^{\alpha}(f)(Y)\, dY\right)\, dt}{c\int_0^1 (\mathcal H^{\alpha}(f))^2(c x)\, dx} \\
&=& \frac{c^2 \int_0^1 \left(\int_0^1 G_{\alpha}(cx, cy) \mathcal H^{\alpha}(f)(cy)\, dy\right)\cdot \mathcal H^{\alpha}(f)(cx)\, dx}{c\int_0^1 (\mathcal H^{\alpha}(f))^2(c x)\, dx}\\
&=&\frac{\int_0^1 \mathcal Q_c^{\alpha}      \mathcal H^{\alpha}(f)(cx) \cdot \mathcal H^{\alpha}(f)(cx)\, dx}{\int_0^1 (\mathcal H^{\alpha}(f))^2(c x)\, dx}.
\end{eqnarray*}
A standard result about the maximization of a quadratic form tells us that the solution of the energy maximization problem
\begin{equation}\label{energy_problem}
\mbox{ find } f = \arg \max_{f\in \mathcal B_c^{\alpha}} \frac{\|f\|^2_{L^2(0,1)}}{\|f\|^2_{L^2(0,\infty)}}
\end{equation}
is given by the first eigenfunction, with the largest eigenvalue $\lambda_{0,\alpha}(c)$ of the operator
$\mathcal Q_c^{\alpha}.$ Since   $\mathcal Q_c^{\alpha}=c \,\mathcal H_c^{\alpha} \mathcal H_c^{\alpha},$ then the eigenfunctions  of $\mathcal Q_c^{\alpha}$ are also the eigenfunctions $\vp$ of $\mathcal H_c^{\alpha}$ and the eigenvalues of $\mathcal Q_c^{\alpha}$ are related to the eigenvalues of $\mathcal H_c^{\alpha}$
by the following rule
\begin{equation}\label{Eigenvalues_relation}
\lambda_{n,\alpha}(c) = c\, |\mu_{n,\alpha}(c)|^2,\quad n\geq 0.
\end{equation}

Finally, we should mention that throughout this work, the eigenfunctions $\vp$ are normalized by the following rule,
\begin{equation}
\label{normal}
\int_0^1 \left(\vp(x)\right)^2\, dx = 1,\quad \int_0^{\infty} \left(\vp(x)\right)^2\, dx = \frac{1}{{\lambda_{n,\alpha}(c)}}.
\end{equation}

\section{Some explicit estimates and bounds of the eigenfunctions.}

In this paragraph, we give an explicit upper bound of  $|\vp(x)|$ with  $x\in I=[0,1],$ and $\alpha > -1/2.$ To this end,  we first   show that under some conditions on $n,$ $c,$  the maximum of $|\vp(x)|$ is  attained at $x=1.$ This is given by the following lemma.

\begin{lemma}\label{Sup_phi} Let $c>0,$ $\alpha > -1/2$ be two real numbers. If  $c^2>\alpha^2-\frac{1}{4},$ and $\chi_{n,\alpha}(c)>c^2+\alpha^2-\frac{1}{4},$ then we have
\begin{equation}\label{Ineq2}
\sup_{x\in [a_{\alpha},1]}|\vp(x)|= |\vp(1)|,\quad a_{\alpha}=\left\{\begin{array}{ll} 0&\mbox{ if } \alpha^2\leq 1/4\\
\left(\frac{\alpha^2-1/4}{c^2}\right)^{1/4}&\mbox{ if } \alpha>1/2.\end{array}\right.
\end{equation}
\end{lemma}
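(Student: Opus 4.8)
The plan is to reduce the statement to a one--dimensional monotonicity argument, in the spirit of the method used for the classical prolate functions in \cite{Bonami-Karoui1}. Set $p(x)=1-x^2$ and $q(x)=\chi_{n,\alpha}(c)-c^2x^2+\frac{1/4-\alpha^2}{x^2}$, so that the eigenequation \eqref{differ_operator2} reads $\frac{d}{dx}\bigl[p(x)(\vp)'(x)\bigr]+q(x)\vp(x)=0$ on $(0,1)$, with $\vp$ regular at $x=1$. I would introduce the auxiliary energy function
\[
E(x)=\vp(x)^2+\frac{p(x)\,\bigl((\vp)'(x)\bigr)^2}{q(x)},\qquad x\in(a_{\alpha},1],
\]
and prove that it is non--decreasing on $[a_{\alpha},1]$. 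Differentiating $E$ and using $p(\vp)''=-q\vp-p'(\vp)'$ from the equation, one obtains after cancellation the clean identity $E'(x)=-\frac{(pq)'(x)}{q(x)^2}\,\bigl((\vp)'(x)\bigr)^2$. Consequently the lemma follows once we check, on $[a_{\alpha},1]$, that \emph{(i)} $q(x)>0$ and \emph{(ii)} $(pq)'(x)\le 0$: granting (i)--(ii), $E$ is non--decreasing with $E(x)\ge\vp(x)^2$, while $p(1)=0$ forces $E(1)=\vp(1)^2$, hence $\vp(x)^2\le E(x)\le E(1)=\vp(1)^2$ for all $x\in[a_{\alpha},1]$. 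In the case $a_{\alpha}=0$ one argues on $(0,1]$ and extends the bound to the origin using the local behaviour $\vp(x)=O(x^{\alpha+1/2})$ coming from \eqref{expansion1} (which makes $\vp(x)\to0$ and $p(x)\bigl((\vp)'(x)\bigr)^2/q(x)\to0$ as $x\to0^+$); note that the hypothesis $c^2>\alpha^2-\frac14$ is exactly what guarantees $a_{\alpha}<1$, so that $[a_{\alpha},1]$ is non--degenerate.

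\emph{Positivity of $q$ on $[a_{\alpha},1]$.} If $\alpha^2\le\frac14$, then on $(0,1]$ we have $-c^2x^2\ge-c^2$ and $\frac{1/4-\alpha^2}{x^2}\ge\frac14-\alpha^2\ge0$, so $q(x)\ge\chi_{n,\alpha}(c)-c^2-\alpha^2+\frac14>0$ by hypothesis. If $\alpha>\frac12$, a direct computation gives $q'(x)=\frac{2}{x^3}\bigl((\alpha^2-\tfrac14)-c^2x^4\bigr)$, which is $\le0$ precisely when $x\ge a_{\alpha}=\left(\frac{\alpha^2-1/4}{c^2}\right)^{1/4}$; hence $q$ is non--increasing on $[a_{\alpha},1]$ and $q(x)\ge q(1)=\chi_{n,\alpha}(c)-c^2-\alpha^2+\frac14>0$ there. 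This also accounts for the choice of $a_{\alpha}$: it is the point where $q$ is largest on $(0,1]$.

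\emph{Sign of $(pq)'$ on $[a_{\alpha},1]$.} Multiplying $(pq)'(x)$ by $x^3/2>0$ and setting $v=x^2$ turns the desired inequality into
\[
h(v):=2c^2v^3-\bigl(\chi_{n,\alpha}(c)+c^2\bigr)v^2+\bigl(\alpha^2-\tfrac14\bigr)\le0,\qquad v\in[a_{\alpha}^2,1].
\]
Since $h'(v)=2v\bigl(3c^2v-(\chi_{n,\alpha}(c)+c^2)\bigr)$, the cubic $h$ is decreasing and then increasing on $(0,\infty)$, so its maximum over $[a_{\alpha}^2,1]$ is attained at an endpoint; it therefore suffices to verify $h(1)\le0$ and $h(a_{\alpha}^2)\le0$. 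We have $h(1)=c^2-\chi_{n,\alpha}(c)+\alpha^2-\tfrac14=-\bigl(\chi_{n,\alpha}(c)-(c^2+\alpha^2-\tfrac14)\bigr)<0$ by hypothesis. For the left endpoint: if $\alpha^2\le\frac14$ then $a_{\alpha}=0$ and $h(0)=\alpha^2-\tfrac14\le0$; if $\alpha>\frac12$, then from $c^2(a_{\alpha}^2)^2=\alpha^2-\tfrac14$ one finds $h(a_{\alpha}^2)=(a_{\alpha}^2)^2\bigl(2c^2a_{\alpha}^2-\chi_{n,\alpha}(c)\bigr)\le0$, because $\chi_{n,\alpha}(c)>c^2+\alpha^2-\tfrac14\ge2c\sqrt{\alpha^2-\tfrac14}=2c^2a_{\alpha}^2$, the middle inequality being the arithmetic--geometric mean inequality. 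This proves (ii) and finishes the argument.

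The genuinely delicate step is (ii), and in particular the left--endpoint estimate for $\alpha>\frac12$: it is there that the exact value of $a_{\alpha}$ and the sharp form of the hypothesis $\chi_{n,\alpha}(c)>c^2+\alpha^2-\frac14$ both come into play, the latter precisely through the bound $c^2+(\alpha^2-\tfrac14)\ge2c\sqrt{\alpha^2-\tfrac14}$. A secondary technical nuisance is the singularity of $\mathcal{D}_c^{\alpha}$ at $x=0$: when $a_{\alpha}=0$ one must make sure that $E$ and the whole chain of inequalities behave correctly as $x\to0^+$, which is handled by the local expansion \eqref{expansion1} of $\vp$ near the origin.
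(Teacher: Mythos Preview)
Your proof is correct and follows essentially the same approach as the paper: both introduce the auxiliary function $E(x)=\vp(x)^2+\frac{p(x)}{q(x)}\bigl((\vp)'(x)\bigr)^2$, compute $E'=-\frac{(pq)'}{q^2}\bigl((\vp)'\bigr)^2$, and then verify the sign of $(pq)'$ on $[a_\alpha,1]$. The only cosmetic difference is in that last verification---the paper uses the bound $\chi_{n,\alpha}(c)+c^2\ge 2c^2+\alpha^2-\tfrac14$ and the factorisation $(1-t^2)\bigl(2c^2-(\alpha^2-\tfrac14)\frac{1+t^2}{t^4}\bigr)$, whereas you recast the inequality as a cubic in $v=x^2$ and check the two endpoints (invoking AM--GM at $v=a_\alpha^2$)---but the argument is otherwise identical.
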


\noindent
{\bf Proof:}
We  recall that $\vp$ is a solution of the following differential equation
$$
 \dfrac{d}{dt} \left[p(t)(\vp)'(t)\right]+q_{\alpha}(t)\vp(t)=0, $$  with
 \begin{equation}
 \label{auxfct}
  p(t)=(1-t^2),   \qquad
  q_{\alpha}(t)= \chi_{n,\alpha}(c)-c^2 t^2+\dfrac{\dfrac{1}{4}-\alpha^2}{t^2}.
\end{equation}
Next, consider the auxiliary function $Q_{\alpha},$ defined on $[a_{\alpha},1]$ by
\begin{equation}
\label{auxfct1}
Q_{\alpha}(t)=\left(\vp(t)\right)^2+\frac{p(t)}{q_{\alpha}(t)}\left((\vp)'(t)\right)^2.
\end{equation}
By computing the derivative of $Q_{\alpha}$ and then  using the identity
$$(\vp)''(t)=\frac{2t}{1-t^2}(\vp)'(t)-\frac{1}{t^2}\left(\chi_n-c^2 t^2+\frac{1/4-\alpha^2}{t^2}\right) \vp(t),$$
one gets
\begin{equation}
\label{derivQ}
Q_{\alpha}'(t)=\frac{2t}{q_{\alpha}^2(t)} \left(\chi_{n,\alpha}(c)+c^2-2 c^2 t^2 -\frac{\alpha^2-1/4}{t^4}\right) \big((\vp)'(t)\big)^2.
\end{equation}
Note that
\begin{eqnarray}
\label{Ineq3}
\chi_{n,\alpha}(c)+c^2-2 c^2 t^2 -\frac{\alpha^2-1/4}{t^4}&\geq& 2 c^2+(\alpha^2-1/4)-2c^2 t^2-\frac{\alpha^2-1/4}{t^4}\nonumber \\
&\geq & (1-t^2)\left(2c^2-(\alpha^2-1/4)\frac{1+t^2}{t^4}\right)\geq 0,\quad t\in (a_{\alpha},1].
\end{eqnarray}
Hence, by combining \eqref{derivQ} and \eqref{Ineq3}, one concludes that
 $Q_{\alpha}$ is increasing on $[0,1]$ and consequently,
$$(\vp)^2(t) \leq Q_{\alpha}(t)\leq (\vp)^2(1),\quad t\in [a_{\alpha},1],$$
which concludes the proof of the lemma.

The following lemma provides us with a useful local estimate of the eigenfunctions $\vp.$
\begin{lemma}\label{localestimate}
Under the notation and conditions of the previous lemma, we have for $ \alpha > - 1/2,$
\begin{equation}\label{local_estimate}
\sup_{t\in [a_{\alpha},1]} \sqrt{1-t^2} |\vp(t)|\leq \sqrt{2}.
\end{equation}
\end{lemma}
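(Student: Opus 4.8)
The plan is to run a weighted version of the argument used for Lemma~\ref{Sup_phi}. Introduce the auxiliary function
\[
S_{\alpha}(t)=(1-t^2)\,Q_{\alpha}(t)=(1-t^2)\bigl(\vp(t)\bigr)^2+\frac{(1-t^2)^2}{q_{\alpha}(t)}\bigl((\vp)'(t)\bigr)^2,\qquad t\in[a_{\alpha},1],
\]
with $Q_{\alpha}$ and $q_{\alpha}$ as in \eqref{auxfct}--\eqref{auxfct1}; this is the natural object to track since the Liouville substitution $\vp=(1-t^2)^{-1/2}\phi$ turns the estimate \eqref{local_estimate} into a bound on $\phi^2=(1-t^2)\vp^2$. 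First I would check that $q_{\alpha}>0$ on $[a_{\alpha},1]$: from the explicit formula one gets $q_{\alpha}'(t)\le 0$ on this interval — this is immediate when $\alpha^2\le\frac14$, and when $\alpha>\frac12$ it is exactly the inequality $c^2t^4\ge \alpha^2-\frac14$, which is the definition of $a_{\alpha}$ — hence $q_{\alpha}(t)\ge q_{\alpha}(1)=\chi_{n,\alpha}(c)-c^2+\frac14-\alpha^2>0$ by hypothesis. In particular $S_{\alpha}(t)\ge (1-t^2)\bigl(\vp(t)\bigr)^2$ on $[a_{\alpha},1]$, so it suffices to prove $S_{\alpha}(t)\le 2$ there.

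Next I would differentiate, using $S_{\alpha}'=-2t\,Q_{\alpha}+(1-t^2)\,Q_{\alpha}'$ together with the formula \eqref{derivQ} for $Q_{\alpha}'$. The term $-\tfrac{2t(1-t^2)}{q_{\alpha}}(\vp')^2$ inside $-2t\,Q_{\alpha}$ cancels against the corresponding piece of $(1-t^2)Q_{\alpha}'$, and a short simplification of $-q_{\alpha}+\chi_{n,\alpha}(c)+c^2-2c^2t^2-\tfrac{\alpha^2-1/4}{t^4}=(1-t^2)\bigl(c^2-\tfrac{\alpha^2-1/4}{t^4}\bigr)$ leaves
\[
S_{\alpha}'(t)=-2t\bigl(\vp(t)\bigr)^2+\frac{2t(1-t^2)^2}{q_{\alpha}^2(t)}\left(c^2-\frac{\alpha^2-\frac14}{t^4}\right)\bigl((\vp)'(t)\bigr)^2 .
\]
The key point — and the reason $a_{\alpha}$ is chosen the way it is — is that the second term is nonnegative on $[a_{\alpha},1]$ (trivially if $\alpha^2\le\frac14$, and because $t\ge a_{\alpha}$ means $c^2-\tfrac{\alpha^2-1/4}{t^4}\ge 0$ if $\alpha>\frac12$). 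Hence $-S_{\alpha}'(t)\le 2t\bigl(\vp(t)\bigr)^2$ throughout $[a_{\alpha},1]$.

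Finally, since $S_{\alpha}(1)=(1-1)\,Q_{\alpha}(1)=0$, integrating this last inequality from $t$ to $1$ and using $0\le t\le 1$ and the normalization \eqref{normal} gives
\[
S_{\alpha}(t)=\int_{t}^{1}\bigl(-S_{\alpha}'(s)\bigr)\,ds\le \int_{t}^{1}2s\bigl(\vp(s)\bigr)^2\,ds\le 2\int_{0}^{1}\bigl(\vp(s)\bigr)^2\,ds=2 ,
\]
and combining with $S_{\alpha}(t)\ge(1-t^2)\bigl(\vp(t)\bigr)^2$ yields \eqref{local_estimate}. (When $\alpha^2\le\frac14$ one has $a_{\alpha}=0$, where $\vp(0)=0$ since $\alpha>-\frac12$, so the estimate is trivial at the left endpoint.) I expect the only genuinely delicate step to be the sign analysis of $S_{\alpha}'$ together with the positivity of $q_{\alpha}$ on $[a_{\alpha},1]$ — both of which hinge on the precise form of $a_{\alpha}$ — while the passage from $S_{\alpha}\le 2$ to \eqref{local_estimate} is just the one-line integration against the $L^2$-normalization above.
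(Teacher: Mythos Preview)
Your proof is correct and follows essentially the same route as the paper: the paper introduces the same auxiliary function (there denoted $K_{\alpha}(t)=(1-t^2)Q_{\alpha}(t)$), obtains the identical expression for its derivative, observes that $K_{\alpha}'(t)\ge -2t(\vp(t))^2$ on $[a_{\alpha},1]$, and integrates from $t$ to $1$ against the $L^2$-normalization to get $K_{\alpha}(t)\le 2$. Your explicit verification that $q_{\alpha}>0$ on $[a_{\alpha},1]$ is a useful addition the paper leaves implicit.
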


\noindent
{\bf Proof:} We first consider the auxiliary function $K_{\alpha}(\cdot),$ defined  by
$K_\alpha(t)= (1-t^2) Q_{\alpha}(t),$
where $Q_{\alpha}$ is given by \eqref{auxfct1}.  Straightforward computations give us
\begin{eqnarray*}
K'_\alpha(t) &=& -2 t Q_{\alpha}(t)+(1-t^2)Q'_{\alpha}(t)\\
&=& -2 t (\vp)^2(t) +\frac{2t (1-t^2)}{q_{\alpha}(t)}\left(\frac{\chi_{n,\alpha}(c)+c^2-2c^2 t^2+(1/4-\alpha^2)/t^4}{q_{\alpha}(t)} -1\right) ((\vp)'(t))^2\\
&=& -2 t (\vp)^2(t) +\frac{2t (1-t^2)^2}{q_{\alpha}^2(t)}\left(c^2-\frac{\alpha^2-1/4}{t^4}\right) ((\vp)'(t))^2\\
&\geq & -2 t (\vp)^2(t),\quad t\in [a_{\alpha},1].
\end{eqnarray*}
Hence, we have
$$K_\alpha(x)=K_\alpha(x)-K_\alpha(1) \leq \int_x^1 2 t (\vp)^2(t)\, dt \leq 2 \int_0^1 (\vp)^2(t)\, dt=2,$$
which concludes the proof of the lemma.$\qquad \Box$\\

As a consequence of Lemma \ref{Sup_phi} and Lemma \ref{localestimate}, we obtain a bound for the eigenfunctions
$\vp,$ given by the following proposition.

\begin{proposition}\label{Sup2_phi}
Let $c>0,$ $ \alpha > -1/2$ be two real numbers. If  $c^2>\alpha^2-\frac{1}{4},$ and $\chi_{n,\alpha}(c) >c^2+\alpha^2-\frac{1}{4},$ then we have
\begin{equation}\label{Ineq2}
\sup_{x\in [a_{\alpha},1]}|\vp(x)| \leq 3 \sqrt{\frac{3}{2}} \left(\chi_{n,\alpha}(c)\right)^{1/2}.
\end{equation}
Here, $a_{\alpha}$ is as given by \eqref{Ineq2}.
\end{proposition}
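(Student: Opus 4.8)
The plan is to combine the two preceding lemmas with a lower bound on the weight $q_\alpha(t)$ near $t=1$, then bootstrap from the point $t=1$ back toward the interior. Lemma \ref{localestimate} gives $\sqrt{1-t^2}\,|\vp(t)|\le\sqrt2$ on $[a_\alpha,1]$, which already controls $\vp$ away from $1$; the only missing piece is a bound at and near the turning point $t=1$. For that I would invoke Lemma \ref{Sup_phi}, which reduces the whole supremum on $[a_\alpha,1]$ to $|\vp(1)|$, and then estimate $|\vp(1)|$ itself.

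The main step is to bound $|\vp(1)|$. Here I would use the monotonicity of $Q_\alpha$ from \eqref{derivQ}: for any $t_0\in[a_\alpha,1)$ one has $(\vp)^2(1)=\lim_{t\to1}Q_\alpha(t)\ge Q_\alpha(t_0)\ge \dfrac{p(t_0)}{q_\alpha(t_0)}\big((\vp)'(t_0)\big)^2$, but more usefully $(\vp)^2(1)\ge (\vp)^2(t_0)$, so in fact $Q_\alpha(1)=(\vp)^2(1)$ dominates everything — the issue is to get an \emph{upper} bound on $(\vp)^2(1)$. For this I would integrate: since $Q_\alpha$ is increasing, $(\vp)^2(1)-(\vp)^2(t)=\int_t^1 Q_\alpha'(s)\,ds$, and I need to control $\int_t^1 Q_\alpha'(s)\,ds$. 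Alternatively, and more in the spirit of the constant $3\sqrt{3/2}$ appearing in the statement, I would work with $K_\alpha(t)=(1-t^2)Q_\alpha(t)$ from Lemma \ref{localestimate}: we proved $K_\alpha(x)\le 2$, i.e. $(1-x^2)(\vp)^2(x)+\dfrac{(1-x^2)^2}{q_\alpha(x)}\big((\vp)'(x)\big)^2\le 2$. Choosing $x=x_0$ with $1-x_0^2$ comparable to a small constant, say $1-x_0^2=\tfrac13$ (which forces $x_0\ge a_\alpha$ once $\chi_{n,\alpha}(c)>c^2+\alpha^2-\tfrac14$ and $c^2>\alpha^2-\tfrac14$, since then $a_\alpha<1$ with room to spare — this is where the hypotheses are used), gives $(\vp)^2(x_0)\le 6$, hence $|\vp(x_0)|\le\sqrt6$. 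Then $(\vp)^2(1)=Q_\alpha(1)\ge Q_\alpha(x_0)\ge (\vp)^2(x_0)$ goes the wrong way, so instead I bound $(\vp)^2(1)-(\vp)^2(x_0)=\int_{x_0}^1 Q_\alpha'(s)\,ds$ where $Q_\alpha'(s)=\dfrac{2s}{q_\alpha^2(s)}\big(\chi_{n,\alpha}(c)+c^2-2c^2s^2-\tfrac{\alpha^2-1/4}{s^4}\big)\big((\vp)'(s)\big)^2$. On $[x_0,1]$ one has $q_\alpha(s)\ge \chi_{n,\alpha}(c)-c^2+\tfrac{1/4-\alpha^2}{1}\ge \chi_{n,\alpha}(c)-c^2-(\alpha^2-\tfrac14)>0$ and $\chi_{n,\alpha}(c)+c^2-2c^2s^2-\tfrac{\alpha^2-1/4}{s^4}\le 2\chi_{n,\alpha}(c)$, so $Q_\alpha'(s)\le \dfrac{4\chi_{n,\alpha}(c)}{q_\alpha(s)^2}\big((\vp)'(s)\big)^2\le \dfrac{4\chi_{n,\alpha}(c)}{q_\alpha(s)}\cdot\dfrac{1}{q_\alpha(s)}\big((\vp)'(s)\big)^2$; using $\dfrac{p(s)}{q_\alpha(s)}\big((\vp)'(s)\big)^2\le Q_\alpha(s)\le Q_\alpha(1)=(\vp)^2(1)$ and $p(s)=(1-s^2)\ge$ (its value at $s=1$, which is $0$ — so instead bound $\dfrac{1}{q_\alpha(s)}\big((\vp)'(s)\big)^2$ directly from $K_\alpha(s)\le2$, giving $\dfrac{(1-s^2)^2}{q_\alpha(s)}\big((\vp)'(s)\big)^2\le2$). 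Putting these together, $\int_{x_0}^1 Q_\alpha'(s)\,ds\le 4\chi_{n,\alpha}(c)\int_{x_0}^1\dfrac{2\,ds}{(1-s^2)^2}$, which diverges; so the clean route is the first one: $(\vp)^2(1)\le (\vp)^2(x_0)+\int_{x_0}^1 Q_\alpha'(s)\,ds$ is not integrable as written, meaning one must instead bound $(\vp)^2(1)$ by evaluating $K_\alpha$ and using that $(\vp)'$ stays bounded — the cleanest is: $K_\alpha'(s)\ge -2s(\vp)^2(s)\ge -2s Q_\alpha(s)= -\dfrac{2s}{1-s^2}K_\alpha(s)$, a Grönwall inequality for $K_\alpha$ which, integrated from $x_0$ to $1$, gives $K_\alpha(1)\ge K_\alpha(x_0)\dfrac{1-1}{1-x_0^2}$ — again degenerate at $1$.

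Given these degeneracies, the \textbf{main obstacle} is precisely extracting a bound at the singular endpoint $t=1$ from the two lemmas, since the weight $1-t^2$ vanishes there. The resolution I would actually pursue: do not evaluate at $t=1$ at all through $K_\alpha$, but instead use Lemma \ref{Sup_phi} to note $\sup_{[a_\alpha,1]}|\vp|=|\vp(1)|$ together with $(\vp)^2(1)=Q_\alpha(1)=\lim_{t\to1}Q_\alpha(t)$, and bound $Q_\alpha(1)$ by combining monotonicity of $Q_\alpha$ with an $L^2$ bound: integrate $Q_\alpha'$ against a smart weight, or — simplest — observe $(\vp)^2(1)=Q_\alpha(1)\le Q_\alpha(1)$ and that for $t\in[x_0,1]$, $(\vp)^2(t)\le (\vp)^2(1)$, hence $(1-x_0^2)(\vp)^2(1)\le\int_{x_0}^1(\vp)^2(t)\,dt+\big((\vp)^2(1)-(\vp)^2(t)\big)$-type telescoping is replaced by: $\int_{x_0}^{1}(\vp)^2(1)\,dt\ge\int_{x_0}^1(\vp)^2(t)\,dt$ is the wrong direction, so use instead $(1-x_0)(\vp)^2(x_0)\le\int_{x_0}^1(\vp)^2$ is false too. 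The genuinely correct and short argument is: from $K_\alpha(t)\le2$ we get $\dfrac{p(t)}{q_\alpha(t)}\big((\vp)'(t)\big)^2\le\dfrac{2}{1-t^2}$, so $Q_\alpha(t)\le(\vp)^2(t)+\dfrac{2}{1-t^2}$; but also $Q_\alpha(1)\ge Q_\alpha(t)$ and $Q_\alpha(1)=(\vp)^2(1)$, yielding nothing new. Therefore I expect the author's actual proof proceeds by: (i) fix $x_1$ with $1-x_1^2=1/\chi_{n,\alpha}(c)$ (legitimate since $\chi_{n,\alpha}(c)$ is large), (ii) from $K_\alpha(x_1)\le2$ deduce $(\vp)^2(x_1)\le 2\chi_{n,\alpha}(c)$, so $|\vp(x_1)|\le\sqrt2\,\sqrt{\chi_{n,\alpha}(c)}$, (iii) on $[x_1,1]$, $Q_\alpha$ increasing gives $(\vp)^2(t)\le Q_\alpha(t)\le Q_\alpha(1)=(\vp)^2(1)$ and also $(\vp)^2(1)\le Q_\alpha(1)$; to cap $(\vp)^2(1)$, integrate $Q_\alpha'$ over $[x_1,1]$ bounding $\dfrac{1}{q_\alpha(s)}((\vp)'(s))^2\le \dfrac{Q_\alpha(s)}{p(s)}\le\dfrac{(\vp)^2(1)}{1-s^2}$ and $\chi_{n,\alpha}(c)+c^2-2c^2s^2-\tfrac{\alpha^2-1/4}{s^4}\le2\chi_{n,\alpha}(c)$, and $q_\alpha(s)\ge\chi_{n,\alpha}(c)$ on $[x_1,1]$ (since there $-c^2s^2+\tfrac{1/4-\alpha^2}{s^2}\ge -c^2+\tfrac14-\alpha^2\ge -(c^2+\alpha^2-\tfrac14)>-\chi_{n,\alpha}(c)$ by hypothesis, giving $q_\alpha(s)>0$; and in fact $q_\alpha(s)\ge\chi_{n,\alpha}(c)-c^2-(\alpha^2-\tfrac14)$, which we can take $\ge\tfrac12\chi_{n,\alpha}(c)$ when $\chi_{n,\alpha}(c)\ge 2(c^2+\alpha^2-\tfrac14)$, or handle the intermediate regime separately), obtaining $(\vp)^2(1)-(\vp)^2(x_1)\le\int_{x_1}^1\dfrac{2s\cdot2\chi_{n,\alpha}(c)}{q_\alpha(s)}\cdot\dfrac{(\vp)^2(1)}{1-s^2}\,ds$, and since $\int_{x_1}^1\dfrac{2s\,ds}{1-s^2}=\ln\dfrac{1-x_1^2}{0}$ still diverges, the weight $1/q_\alpha$ must be paired instead with $((\vp)')^2$ via $K_\alpha$, i.e. use $\dfrac{(1-s^2)^2}{q_\alpha(s)}((\vp)'(s))^2\le2$ directly, giving $Q_\alpha'(s)\le\dfrac{2s}{q_\alpha(s)}\cdot\dfrac{2\chi_{n,\alpha}(c)}{(1-s^2)^2}\cdot\dfrac{(1-s^2)^2}{q_\alpha(s)}((\vp)'(s))^2\cdot\dfrac{q_\alpha(s)}{(1-s^2)^2}$ — the algebra must be arranged so the $(1-s^2)^{-2}$ cancels, which it does if one writes $Q_\alpha'(s)\le\dfrac{4\chi_{n,\alpha}(c)\,s}{q_\alpha(s)^2}((\vp)'(s))^2$ and bounds $\dfrac{((\vp)'(s))^2}{q_\alpha(s)}\le\dfrac{2}{(1-s^2)^2}$ and $\dfrac{1}{q_\alpha(s)}\le\dfrac{C}{\chi_{n,\alpha}(c)}$, landing $Q_\alpha'(s)\le\dfrac{8Cs}{(1-s^2)^2}$, integrable? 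No, $\int(1-s^2)^{-2}$ near $1$ diverges like $(1-s)^{-1}$. The way out is the choice $1-x_1^2\sim\chi_{n,\alpha}(c)^{-1}$ combined with the \emph{exact} bound $(\vp)^2(1)\le Q_\alpha(1)$ and the observation that the extension formula \eqref{expansion2}, or a direct energy argument, controls $(\vp)^2(1)$ by $\chi_{n,\alpha}(c)$ up to an absolute constant. In summary: fix $x_1=\sqrt{1-1/\chi_{n,\alpha}(c)}\ge a_\alpha$, get $|\vp(x_1)|\le\sqrt2\sqrt{\chi_{n,\alpha}(c)}$ from Lemma \ref{localestimate}, then since $|\vp|$ is increasing in a neighborhood of $1$ (again Lemma \ref{Sup_phi}, via $Q_\alpha\nearrow$ and $p((\vp)')^2/q_\alpha\to0$ is not automatic, so one uses that $\vp(1)^2=\sup$), bound $\sup_{[a_\alpha,1]}|\vp|=|\vp(1)|$ and close the gap $[x_1,1]$ of length $\asymp\chi_{n,\alpha}(c)^{-1}$ by a Grönwall/telescoping estimate whose accumulated factor is $O(1)$ precisely because the interval is that short, producing the constant $3\sqrt{3/2}$. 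The hard part, then, is this final short-interval estimate near the essential singularity $t=1$; everything else is bookkeeping with the already-established Lemmas \ref{Sup_phi} and \ref{localestimate} and the explicit formula \eqref{derivQ}.
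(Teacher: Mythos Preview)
Your proposal correctly identifies the crux: Lemmas \ref{Sup_phi} and \ref{localestimate} reduce everything to bounding $|\vp(1)|$, and the difficulty is the vanishing weight $1-t^2$ at $t=1$. But none of the routes you sketch actually closes. Every attempt to integrate $Q_\alpha'$ or to run Gr\"onwall on $K_\alpha$ near $t=1$ runs into a non-integrable $(1-s^2)^{-2}$ or $(1-s)^{-1}$, as you yourself observe repeatedly. Your final suggestion---pick $x_1$ with $1-x_1^2\sim \chi_{n,\alpha}(c)^{-1}$ and ``close the gap by a Gr\"onwall/telescoping estimate whose accumulated factor is $O(1)$''---is not executed, and in fact the same divergence would resurface if you tried to integrate $Q_\alpha'$ on $[x_1,1]$.

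The paper's argument avoids $Q_\alpha'$ entirely on the short interval. Instead, it integrates the \emph{differential equation} itself over $[x,1]$ to obtain
\[
(\vp)'(x)=\frac{\chi_{n,\alpha}(c)}{1-x^2}\int_x^1 G_\alpha(t)\,\vp(t)\,dt,\qquad G_\alpha(t)=1-qt^2-\frac{\alpha^2-1/4}{\chi_{n,\alpha}(c)\,t^2},
\]
and then bounds $|\vp(t)|\le\vp(1)$ inside the integral (by Lemma \ref{Sup_phi}). This yields the self-referential estimate
\[
|\vp(1)-\vp(x)|\le \chi_{n,\alpha}(c)\,G_\alpha(x)\,\frac{1-x}{1+x}\,\vp(1).
\]
Now choose $x_n$ so that $\chi_{n,\alpha}(c)\,G_\alpha(x_n)\,\dfrac{1-x_n}{1+x_n}=a<1$; then $\vp(1)\le\dfrac{1}{1-a}|\vp(x_n)|$, and Lemma \ref{localestimate} at $x_n$ gives $|\vp(x_n)|\le\sqrt{2}/\sqrt{1-x_n^2}$. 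Since $G_\alpha(x)/(1+x)\le1$, the defining equation for $x_n$ forces $1-x_n\ge a/\chi_{n,\alpha}(c)$, so $\vp(1)\le \sqrt{2}\,a^{-1/2}(1-a)^{-1}\sqrt{\chi_{n,\alpha}(c)}$. Optimizing over $a$ gives $a=1/3$ and the constant $3\sqrt{3/2}$. The missing idea in your proposal is precisely this: feed $\vp(1)$ back into the integrated ODE to get an inequality of the form $\vp(1)(1-a)\le|\vp(x_n)|$, rather than trying to control $Q_\alpha(1)-Q_\alpha(x_0)$ directly.
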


\noindent
{\bf Proof:} Without loss of generality, we may assume that $\vp(1)>0.$  By integrating \eqref{differ_operator2} over the interval $[x,1],$ with $x\in J=[a_{\alpha},1),$ one gets
\begin{equation}
\label{deriv_phi}
(\vp)'(x)=\frac{\chi_{n,\alpha}(c)}{1-x^2}\int_x^1 \left(1- q t^2-\frac{\alpha^2-1/4}{\chi_{n,\alpha}(c)\, t^2}\right) \vp(t) \, dt,\quad q=c^2/\chi_{n,\alpha}(c).
\end{equation}
Let $G_{\alpha}$ be the function defined on $J$ by $$G_{\alpha}(t) =1- q t^2-\frac{\alpha^2-1/4}{\chi_{n,\alpha}(c) \, t^2}.$$ It can be easily checked that if $c^2>\alpha^2-\frac{1}{4},$ then
$G_{\alpha}$ is decreasing and positive in $J.$ Hence, by using \eqref{Ineq2} and \eqref{deriv_phi}, one gets
\begin{eqnarray*}
|(\vp)'(x)| &\leq & \frac{\chi_{n,\alpha}(c)}{1-x^2}G_{\alpha}(x) \vp(1) (1-x) =  \frac{\chi_{n,\alpha}}{1+x} G_{\alpha}(x) \vp(1).
\end{eqnarray*}
Consequently, we have,
\begin{equation}
\label{Ineqq1}
|\vp(1)-\vp(x)|\leq \frac{\chi_{n,\alpha}(c)}{1+x}  G_{\alpha}(x) (1-x) \vp(1).
\end{equation}
In a similar manner as it is  done in \cite{Bonami-Karoui1}, let $x_n\in J$ with
\begin{equation}
\label{Ineqq2}
G_{\alpha}(x_n) \frac{1-x_n}{1+x_n}=\frac{a}{\chi_{n,\alpha}(c)},
\end{equation}
 where $a>0$ is a constant to be fixed later on. By combining \eqref{Ineqq1} and \eqref{Ineqq2} and by using 
 the result of lemma 1, one gets
 \begin{equation}
 \label{Ineqq3}
\vp(1)\leq \frac{1}{1-a} |\vp(x_n)|\leq \frac{1}{1-a} \frac{\sqrt{2}}{\sqrt{1-x_n^2}}.
\end{equation}
On the other hand, since for any $x\in J,$ we have ${\displaystyle \frac{G_{\alpha}(x)}{1+x} \leq  1,}$ then from
\eqref{Ineqq2}, we have
$$\frac{1}{\sqrt{1-x_n^2}} \leq \frac{1}{\sqrt{1-x_n}} \leq \sqrt{\frac{\chi_{n,\alpha}}{a}}.$$
By combining the previous two inequalities, one gets
$$\vp(1)\leq \frac{1}{a^{1/2} (1-a)} \left(\chi_{n,\alpha}(c)\right)^{1/2}. $$ To conclude the proof, it suffices to note that the
minimum of the quantity $\frac{1}{a^{1/2} (1-a)}$ is obtained for $a= 1/3.$ $\qquad \Box$

To extend the previous result to the case where $\alpha >\frac{1}{2},$ and the interval $[a_{\alpha}, 1]$ is substituted
with the whole interval $[0,1],$ we first need to locate the first positive zero 
of $\vp.$ For this purpose, we use  the following Sturm-Liouville comparison theorem, that compares the zeros of the eigenfunctions of two
second order differential operators, see for example [\cite{Sturm-Liouville Theory Past and Present}, page 4]

\begin{theorem}[{\bf Sturm Comparison Theorem}] Let $p_i, r_i, i=1,2$ be two real continuous functions on the interval $[a,b]$ and let
$$\left(p_1(x) u'\right)'+r_1(x) u=0,\qquad  \left(p_2(x) v'\right)'+r_2(x) v=0,$$
be two ODE with $0<p_2(x)\leq p_1(x)$ and $r_1(x) \leq r_2(x).$ Then between any two zeros of $u,$ there exists a zero of $v.$
\end{theorem}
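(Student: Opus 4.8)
The plan is to deduce the statement from Picone's identity via a contradiction argument. First I would reduce to the case of two \emph{consecutive} zeros of $u$: if $x_1<x_2$ are any two zeros of a non-trivial solution $u$, then $u$ has two consecutive zeros inside $[x_1,x_2]$, so it suffices to produce a zero of $v$ in $[x_1,x_2]$ when $u(x_1)=u(x_2)=0$ and $u$ has no zero in $(x_1,x_2)$; normalising signs we may take $u>0$ on $(x_1,x_2)$. Then I would argue by contradiction, assuming that $v$ does not vanish on $[x_1,x_2]$, and (again normalising) that $v>0$ there.

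Under this assumption I would introduce the auxiliary function
$$\Phi(x)=\frac{u(x)}{v(x)}\Big(p_1(x)\,v(x)\,u'(x)-p_2(x)\,u(x)\,v'(x)\Big)=p_1(x)\,u(x)\,u'(x)-p_2(x)\,\frac{u(x)^2\,v'(x)}{v(x)},$$
which is $C^1$ on $[x_1,x_2]$ because $v>0$ there, and which satisfies $\Phi(x_1)=\Phi(x_2)=0$ since $u$ vanishes at the endpoints. Differentiating $\Phi$ and eliminating $u''$ and $v''$ through the relations $(p_1u')'=-r_1u$ and $(p_2v')'=-r_2v$ should yield, after a routine computation, Picone's identity
$$\Phi'(x)=\big(p_1(x)-p_2(x)\big)\big(u'(x)\big)^2+\big(r_2(x)-r_1(x)\big)\big(u(x)\big)^2+p_2(x)\left(u'(x)-\frac{u(x)\,v'(x)}{v(x)}\right)^2.$$

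Integrating over $[x_1,x_2]$ gives $0=\Phi(x_2)-\Phi(x_1)=\int_{x_1}^{x_2}\Phi'$, while the hypotheses $0<p_2\le p_1$ and $r_1\le r_2$ make the integrand nonnegative on $[x_1,x_2]$. Hence each of the three terms vanishes identically; in particular $p_2\,(u'-uv'/v)^2\equiv0$, and since $p_2>0$ this forces $(u/v)'\equiv0$ on $(x_1,x_2)$, i.e. $u=\kappa v$ for a constant $\kappa$, whence $u(x_1)=0$ and $v(x_1)>0$ give $\kappa=0$ and $u\equiv0$ on $(x_1,x_2)$ — contradicting $u>0$. Therefore $v$ vanishes at some point of $[x_1,x_2]$, which proves the theorem.

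The only genuinely delicate point I foresee is the degenerate case $p_1\equiv p_2$, $r_1\equiv r_2$ on $[x_1,x_2]$: then $u$ and $v$ may be proportional and the zero of $v$ furnished above can sit at an endpoint rather than strictly inside $(x_1,x_2)$, which is still consistent with the statement as phrased; when $p_1\not\equiv p_2$ or $r_1\not\equiv r_2$ the same estimate actually places the zero in the open interval. The remaining verifications — that $\Phi$ is $C^1$ up to the endpoints and that the Picone identity holds term by term — are routine given $v>0$ on $[x_1,x_2]$ and the assumed regularity of the coefficients and of the (classical) solutions $u,v$.
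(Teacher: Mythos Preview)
Your argument via Picone's identity is correct and is one of the standard proofs of the Sturm comparison theorem. The computation of $\Phi'$ checks out: differentiating $\Phi=p_1uu'-p_2u^2v'/v$ and substituting $(p_1u')'=-r_1u$, $(p_2v')'=-r_2v$ gives exactly the three nonnegative terms you wrote, and the contradiction step (forcing $u\equiv\kappa v$ with $\kappa=0$) is clean once one observes that the contradiction hypothesis ``$v$ has no zero in $[x_1,x_2]$'' already excludes $v(x_1)=0$, so $v$ is bounded away from zero on the closed interval and $\Phi$ is genuinely $C^1$ up to the endpoints.

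There is nothing to compare against in the paper itself: the theorem is quoted there as a classical tool, with a reference to \cite{Sturm-Liouville Theory Past and Present} (p.~4), and no proof is given. Your Picone-identity proof is precisely the kind of argument one finds in that reference; the alternative classical route is the Wronskian-type argument (integrating $u(p_2v')'-v(p_1u')'$ directly and analysing the sign of $p_1u'v$ at the endpoints), which is slightly shorter but gives less information in the non-strict case. Either is acceptable here, and your handling of the borderline case $p_1\equiv p_2$, $r_1\equiv r_2$ is appropriate for the statement as phrased (a zero in the closed interval).
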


The following proposition gives a location of the first zero of $\vp,$ where $\alpha >1/2.$

\begin{proposition}\label{zero_location}
Let $c>0, \alpha >1/2$ be two real numbers. Let $ x_{1,n}$ be the first positive zero of $\vp.$ Then for any 
integer $n$ satisfying  $\chi_{n,\alpha}(c)\geq 2 c \sqrt{\alpha^2-1/4},$ we have
\begin{equation}\label{firstzero}
a_{\alpha}^2=\sqrt{\frac{\alpha^2-\frac{1}{4}}{\chi_{n,\alpha}(c)}}\leq  x_{1,n}\leq\frac{\pi+\frac{\pi}{2}\alpha -\frac{3}{4}}{\sqrt{\chi_{n,\alpha}(c)-(\alpha^2-1/4)}}=b_{\alpha}    .
\end{equation}
\end{proposition}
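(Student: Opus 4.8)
The plan is to apply the Sturm Comparison Theorem twice, once to obtain the upper bound and once (or rather, use a monotonicity argument) for the lower bound, treating the differential equation \eqref{differ_operator2} satisfied by $\vp$ as the reference ODE with coefficients $p(t)=1-t^2$ and $r(t)=q_\alpha(t)=\chi_{n,\alpha}(c)-c^2t^2+\frac{1/4-\alpha^2}{t^2}$. The lower bound should be essentially a restatement of the sign analysis already carried out: on the interval $(0,a_\alpha)$ the potential $q_\alpha(t)$ is negative because the singular term $\frac{1/4-\alpha^2}{t^2}$ dominates; an ODE of the form $(p u')'+q u=0$ with $p>0$ and $q<0$ is disconjugate (no two zeros of a nontrivial solution can bracket an interval on which $q\le 0$, by the standard Sturm argument, or directly: $(p u')'=-qu$ has the same sign as $u$, so $u$ is convex/concave away from the axis and cannot return to zero). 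Since $\vp$ behaves like $t^{\alpha+1/2}$ near $0$ and hence is positive just to the right of $0$, it cannot vanish before $t=a_\alpha^2=\sqrt{(\alpha^2-1/4)/\chi_{n,\alpha}(c)}$, which gives the left inequality. One must be a little careful that $a_\alpha$ in \eqref{firstzero} is written as $a_\alpha^2$ with $a_\alpha$ as in Lemma~\ref{Sup_phi}; I would simply verify that $\sqrt{(\alpha^2-1/4)/\chi_{n,\alpha}(c)}$ is exactly the point where $q_\alpha$ changes sign under the stated hypothesis (and note the hypothesis $\chi_{n,\alpha}(c)\ge 2c\sqrt{\alpha^2-1/4}$ guarantees $q_\alpha$ has real zeros and that the relevant one lies in $(0,1)$).

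For the upper bound I would compare $\vp$ with a solution of a constant-coefficient-type comparison equation on an interval just to the right of $a_\alpha^2$. On $[a_\alpha^2,1]$ we have $p(t)=1-t^2\le 1=:p_2$, and we need a lower bound $r_2(t)\le q_\alpha(t)$; since on this interval $-c^2t^2\ge -c^2$ and $\frac{1/4-\alpha^2}{t^2}\ge\frac{1/4-\alpha^2}{(a_\alpha^2)^2}=-\sqrt{\chi_{n,\alpha}(c)(\alpha^2-1/4)}$... — more simply, bound $-c^2t^2+\frac{1/4-\alpha^2}{t^2}$ from below by a constant. The cleanest choice is to write $q_\alpha(t)\ge \chi_{n,\alpha}(c)-(c^2t^2+\frac{\alpha^2-1/4}{t^2})$ and bound the bracket; I expect the paper bounds it crudely by $\alpha^2-1/4$ plus something, yielding a comparison equation $v''+(\chi_{n,\alpha}(c)-(\alpha^2-1/4))v=0$ whose solutions have consecutive zeros spaced $\pi/\sqrt{\chi_{n,\alpha}(c)-(\alpha^2-1/4)}$ apart. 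By Sturm's theorem, between any two zeros of this $v$ there is a zero of $\vp$; choosing $v(t)=\sin\big(\sqrt{\chi_{n,\alpha}(c)-(\alpha^2-1/4)}\,(t-t_0)\big)$ started at an appropriate $t_0$ near the left endpoint forces $\vp$ to have a zero in an interval of that length, and the explicit numerator $\pi+\frac{\pi}{2}\alpha-\frac34$ in $b_\alpha$ accounts for the offset needed to start the comparison solution (the extra $\frac{\pi}{2}\alpha-\frac34$ terms presumably absorb a shift to get past $a_\alpha^2$ and a constant coming from the asymptotic phase of the Bessel-type behavior of $\vp$ near the singular endpoint).

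The main obstacle I anticipate is getting the constants in the upper bound to come out exactly as $\pi+\frac{\pi}{2}\alpha-\frac34$ rather than just "$O(1)/\sqrt{\chi_{n,\alpha}(c)-(\alpha^2-1/4)}$". This requires being careful about two things: (i) the precise lower bound one uses for the potential $q_\alpha(t)$ on the comparison interval — using the crude constant $\alpha^2-1/4$ is what produces the denominator, but one must check this lower bound actually holds for all $t$ in the interval up to the claimed first zero, which is why the hypothesis on $\chi_{n,\alpha}(c)$ is needed; and (ii) locating where to anchor the comparison sine so that $\vp$ is guaranteed to be positive at the left anchor point — near $t=0$, $\vp(t)\sim C\,t^{\alpha+1/2}$, and translating "the solution that behaves like $t^{\alpha+1/2}$" into a phase for the comparison problem near the turning point $a_\alpha$ is where the $\alpha$-dependent constant $\frac{\pi}{2}\alpha$ enters (it is the phase of $J_{\alpha}$-type behavior, $\sim\cos(\cdot-\frac{\alpha\pi}{2}-\frac{\pi}{4})$). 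So concretely I would: first establish disconjugacy on $(0,a_\alpha^2)$ for the lower bound; then on $[a_\alpha^2, b_\alpha]$ set up the comparison ODE $v''+\lambda^2 v=0$ with $\lambda^2=\chi_{n,\alpha}(c)-(\alpha^2-1/4)$, verify $r_2\le q_\alpha$ and $p_2\ge p$ there; then choose the phase of $v$ using the known sign/asymptotics of $\vp$ at the left endpoint and invoke Sturm to trap the first zero $x_{1,n}$ in $[a_\alpha^2, b_\alpha]$; finally chase the constants to confirm they match $\pi+\frac{\pi}{2}\alpha-\frac34$.
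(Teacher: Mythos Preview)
Your lower-bound argument is essentially the paper's: on the interval where the potential $q_\alpha(t)=\chi_{n,\alpha}(c)-c^2t^2-\frac{\alpha^2-1/4}{t^2}$ is negative the equation is disconjugate, and since $\vp(t)\sim t^{\alpha+1/2}$ near $0$, it cannot vanish before the turning point. One small correction: $\sqrt{(\alpha^2-1/4)/\chi_{n,\alpha}(c)}$ is not \emph{exactly} where $q_\alpha$ changes sign (indeed $q_\alpha$ is still negative there, equal to $-c^2(\alpha^2-1/4)/\chi_{n,\alpha}(c)$), only a convenient lower bound for the smaller root $r_1$ of $q_\alpha$. This does not affect the conclusion.

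The upper-bound plan, however, has a real gap. A constant-coefficient comparison $v''+\lambda^2 v=0$ with $\lambda^2=\chi_{n,\alpha}(c)-(\alpha^2-1/4)$ cannot be anchored at $a_\alpha^2$ because $q_\alpha$ is still negative there and remains so up to $r_1>a_\alpha^2$; you therefore cannot have $r_2(t)\equiv\lambda^2\le q_\alpha(t)$ on $[a_\alpha^2,b_\alpha]$. Even if you shift the anchor to $r_1$, you then face exactly the problem you flag at the end: there is no clean way to read off the phase of $\vp$ at $r_1$, so you cannot manufacture the constant $\pi+\frac{\pi}{2}\alpha-\frac{3}{4}$ from a sine.

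The paper avoids both issues by a different comparison. First it applies the Liouville substitution $U=(1-x^2)^{1/2}\vp$, which puts the equation in Schr\"odinger form $U''+R(x)U=0$ on $(0,1)$; this removes the variable leading coefficient so that both equations in the Sturm comparison have $p\equiv 1$. Second, and this is the key step you are missing, the comparison potential is not taken to be constant: instead one shows
\[
R(x)\ \ge\ \bigl(\chi_{n,\alpha}(c)-(\alpha^2-1/4)\bigr)-\frac{\alpha^2-1/4}{x^2},
\]
retaining the $x^{-2}$ singularity. The comparison equation $V''+\bigl(\lambda^2-\frac{\alpha^2-1/4}{x^2}\bigr)V=0$ is a Bessel equation with bounded solution $V(x)=\sqrt{x}\,J_\alpha(\lambda x)$. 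Now the anchoring is automatic: $V$ vanishes at $x=0$ and again at $j_{\alpha,1}/\lambda$, so by Sturm $\vp$ must vanish somewhere in between. The numerator $\pi+\frac{\pi}{2}\alpha-\frac{3}{4}$ is nothing more than the classical upper bound for $j_{\alpha,1}$. Your intuition that ``the phase of $J_\alpha$-type behavior'' is responsible for the $\frac{\pi}{2}\alpha$ is correct, but making it rigorous requires using the Bessel comparison equation rather than trying to extract a phase from a sine.
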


\noindent
{\bf Proof:} To prove the previous lower bound, we first note that $(\vp)'(0)=0$ whenever $\alpha >1/2.$ Moreover, from the equality,
\begin{equation}
\label{Diff_Equ2}
\frac{d}{d x} \left[(1-x^2) (\vp)'(x)\right]= \left(-\chi_{n,\alpha}(c)+c^2 x^2 +\frac{\alpha^2-1/4}{x^2}\right)\vp(x),
\end{equation}
one concludes that $\vp(x)$ and $(\vp)'(x)$ have the same positive  sign around $x=0,$ as long as the quantity
${\displaystyle -\chi_{n,\alpha}(c)+c^2 x^2 +\frac{\alpha^2-1/4}{x^2}\geq 0.}$ Straightforward computations show that this is the case when ${\displaystyle 0< x \leq r_1,}$ with ${\displaystyle r_1^2 =\frac{\chi_{n,\alpha}(c)}{2 c^2}\left(1-\sqrt{1-4c^2(\alpha^2-1/4)/(\chi_{n,\alpha}(c))^2}\right).}$ Consequently, we have
$$ x_{1,n} \geq r_1 \geq \sqrt{\frac{\alpha^2-\frac{1}{4}}{\chi_{n,\alpha}(c)}}.$$ 
To prove the upper bound in \eqref{firstzero}, we use the change of function 
\begin{equation}
\label{change_function}
U=(1-x^2)^{1/2}\vp
\end{equation}
that transforms the differential equation \eqref{differ_operator2} to 
the following equation for $U$, which has the same zeros as $\varphi$ on $(0, 1),$
\begin{equation}\label{modified}
U''+\left((1-x^2)^{-2}+\frac{\chi_{n,\alpha}(c)-c^2 x^2}{1-x^2}+\frac{\frac{1}{4}-\alpha^2}{x^2(1-x^2)}\right)U=0,\quad x\in (0,1).
\end{equation}
Since $\chi_{n,\alpha}(c) \geq c^2+\alpha^2-\frac{1}{4},$ then we have $-c^2 x^2 \geq -\chi_{n,\alpha}(c) x^2 +(\alpha^2-1/4)x^2.$ Consequently, we have  
\begin{eqnarray*}
(1-x^2)^{-2}+\frac{\chi_{n,\alpha}(c)-c^2 x^2}{1-x^2}+\frac{\frac{1}{4}-\alpha^2}{x^2(1-x^2)}&\geq & \frac{\chi_{n,\alpha}(c)-c^2 x^2}{1-x^2}+\frac{\frac{1}{4}-\alpha^2}{x^2(1-x^2)}\\
&\geq & \chi_{n,\alpha}(c) +\frac{\alpha^2-1/4}{1-x^2}\frac{x^4-1}{x^2}\\
&\geq &\chi_{n,\alpha}(c)-(\alpha^2-1/4)-\frac{\alpha^2-1/4}{x^2}.
\end{eqnarray*}
Then, we use  Sturm Comparison theorem to conclude that the first positive zero of $U$ or of $\vp$ lies before the second
zero of the bounded solution of the  differential equation,
\begin{equation}\label{Modified2}
V'' +\left( \big(\chi_{n,\alpha}(c)-(\alpha^2-1/4)\big)-\frac{\alpha^2-1/4}{x^2}\right) V =0,\quad x\in (0,1).
\end{equation}
It is well known that the bounded solution of the previous differential equation is given by
\begin{equation}
\label{functionV}
V(x)= \sqrt{x}\,  J_{\alpha}\left(\sqrt{\chi_{n,\alpha}(c)-(\alpha^2-1/4)} x\right).
\end{equation}
Note that since $x=0$ is a first zero of $V$ and since from \cite{Breen}, see also \cite{Elbert}, an upper bound of $j_{\alpha,k},$ 
the $k-$th positive zero of the Bessel function $J_{\alpha}(\cdot)$ is given by 
\begin{equation}\label{Besselzero}
j_{\alpha,k} < k \pi +\frac{\pi}{2}\alpha-\frac{0.965}{4} \pi < k \pi +\frac{\pi}{2} \alpha-\frac{3}{4}.
\end{equation}
Consequently, by using the Sturm comparison theorem applied to the equations \eqref{modified} and \eqref{Modified2},
one concludes that the first positive zero of $U$ or of $\vp$ lies before $$\frac{j_{\alpha,1}}{\sqrt{\chi_{n,\alpha}(c)-(\alpha^2-1/4)}} \leq \frac{\pi +\frac{\pi}{2}\alpha-\frac{3}{4}}{\sqrt{\chi_{n,\alpha}(c)-(\alpha^2-1/4)}},$$ which  concludes the proof of the proposition.\\

By using the results of proposition 1 and proposition 2, we get the following theorem that provides us with a bound
for $|\vp(x)|, x\in [0,1]$ which is valid for any $\alpha > -1/2.$

\begin{theorem} let $c>0$ and $\alpha > -1/2,$ be such that $c^2>\alpha^2-1/4.$ Then, for any  positive integer $n$ with 
$\chi_{n,\alpha}(c)\geq  c^2+\alpha^2-1/4$  and ${\displaystyle \frac{\sqrt{\chi_{n,\alpha}(c)} }{1-b_{\alpha}}}b_{\alpha}^{3/2}\leq 3\sqrt{3/2},$ where $b_{\alpha}$ is given by \eqref{firstzero},
 we have 
\begin{equation}\label{bound2}
\sup_{x\in [0,1]}|\vp(x)|\leq  3 \sqrt{\frac{3}{2}} \left(\chi_{n,\alpha}(c)\right)^{1/2}.
\end{equation}
\end{theorem}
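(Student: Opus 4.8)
The plan is to obtain \eqref{bound2} by pasting together the bound of Proposition~\ref{Sup2_phi}, which already covers the interval $[a_\alpha,1]$, with a separate estimate on the left piece $[0,a_\alpha]$; the latter is empty unless $\alpha>1/2$, so the argument naturally splits into two cases.

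\emph{Case $\alpha^2\le 1/4$.} Here $a_\alpha=0$, so $[a_\alpha,1]=[0,1]$ and the hypotheses $c^2>\alpha^2-1/4$ and $\chi_{n,\alpha}(c)\ge c^2+\alpha^2-1/4$ are exactly those of Proposition~\ref{Sup2_phi}. Its conclusion is precisely \eqref{bound2}, and nothing more is needed (the extra condition on $b_\alpha$ is vacuous in this range).

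\emph{Case $\alpha>1/2$.} By Proposition~\ref{Sup2_phi} it suffices to estimate $|\vp(x)|$ on $[0,a_\alpha]$, and I would do this by splitting at the point $b_\alpha$ of Proposition~\ref{zero_location}. On the part of $[0,a_\alpha]$ lying to the right of $b_\alpha$, if any, one stays in the regime where the function $G_\alpha$ from the proof of Proposition~\ref{Sup2_phi} is positive and decreasing, so the argument there applies with no change; the substance is therefore the estimate on $[0,b_\alpha]$. First I would record, from the series expansion \eqref{expansion1} and the normalization of the $P_k^{(\alpha,0)}$, that $\vp(x)=O\!\left(x^{\alpha+1/2}\right)$ as $x\to 0^+$, hence $\vp(0)=0$ and $(\vp)'(0)=0$; and, as already observed in the proof of Proposition~\ref{zero_location}, equation \eqref{Diff_Equ2} forces $\vp$ and $(\vp)'$ to keep a common sign on $(0,x_{1,n})$, so that $\vp$ is monotone on an initial subinterval and performs its whole ``first hump'' inside $[0,x_{1,n}]\subset[0,b_\alpha]$. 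Integrating \eqref{Diff_Equ2} from $0$ to $x\in(0,b_\alpha]$ yields
\[
(1-x^2)(\vp)'(x)=\int_0^x\left(-\chi_{n,\alpha}(c)+c^2t^2+\frac{\alpha^2-1/4}{t^2}\right)\vp(t)\,dt,
\]
and combining a crude majorization of this integrand with the local estimate of Lemma~\ref{localestimate} (transported to $x=b_\alpha$, using $b_\alpha\ge a_\alpha$, or else bounding $|\vp(b_\alpha)|$ directly from the hump) should produce a bound of the shape $\sup_{[0,b_\alpha]}|\vp|\le \dfrac{\sqrt{\chi_{n,\alpha}(c)}}{1-b_\alpha}\,b_\alpha^{3/2}$. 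The hypothesis $\dfrac{\sqrt{\chi_{n,\alpha}(c)}}{1-b_\alpha}\,b_\alpha^{3/2}\le 3\sqrt{3/2}$ is exactly what makes this consistent with \eqref{bound2}; taking the maximum over the two pieces then gives \eqref{bound2} on all of $[0,1]$.

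The hard part will be the estimate on the first hump $[0,b_\alpha]$, because the coefficient $q_\alpha$ of \eqref{auxfct} vanishes at the point $r_1$ where $-\chi_{n,\alpha}(c)+c^2t^2+(\alpha^2-1/4)/t^2=0$: there the monotone auxiliary function $Q_\alpha$ of Lemma~\ref{Sup_phi} degenerates, and $\vp$ is in general not monotone on all of $[0,a_\alpha]$, so one cannot simply repeat the argument of Lemmas~\ref{Sup_phi} and~\ref{localestimate}. Controlling $|\vp|$ across this singular region --- leaning on the exact power law $\vp\asymp x^{\alpha+1/2}$ at the origin, the sign information coming from \eqref{Diff_Equ2}, and the quantitative localization $x_{1,n}\le b_\alpha$ from Proposition~\ref{zero_location} --- is the delicate step, and it is precisely what the extra hypothesis on $b_\alpha$ is tailored to absorb.
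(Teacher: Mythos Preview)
Your treatment of the case $\alpha^2\le 1/4$ is fine, and your instinct for $\alpha>1/2$ --- bound the first hump via $\vp(0)=0$ and an integration of the ODE --- matches the paper's. But the decomposition you propose has a real gap on the middle interval $(b_\alpha,a_\alpha)$, which is nonempty for all large $n$ (since $b_\alpha\to 0$ while $a_\alpha$ is fixed). You assert that ``the argument of Proposition~\ref{Sup2_phi} applies there with no change,'' but that argument rests on Lemma~\ref{Sup_phi}, whose proof shows $Q_\alpha'(t)\ge 0$ only for $t\ge a_\alpha$: the factor $\chi+c^2-2c^2t^2-(\alpha^2-1/4)t^{-4}$ in \eqref{derivQ} blows up to $-\infty$ as $t\to 0^+$, and in fact $G_\alpha$ is \emph{increasing}, not decreasing, on $(0,a_\alpha)$. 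So neither Lemma~\ref{Sup_phi} nor Proposition~\ref{Sup2_phi} gives any control of $|\vp|$ on $(b_\alpha,a_\alpha)$, and nothing in your outline rules out a local maximum there exceeding both $|\vp(1)|$ and the first-hump value.

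The paper closes this gap with Butlewski's theorem: for the equation $(p\,y')'+q\,y=0$ with $p,q>0$, the successive local maxima of $|y|$ increase or decrease according as $pq$ decreases or increases. Here $p(t)q_n(t)$ is shown to be increasing on $(0,t_{\alpha,n})$ and decreasing on $(t_{\alpha,n},1)$ for some $t_{\alpha,n}\in[\,\cdot\,,\tfrac12]$, so the sequence of local maxima of $|\vp|$ is unimodal (decreasing, then increasing). Hence $\sup_{[0,1]}|\vp|$ is realized either at the \emph{first} local maximum $x'_{1,n}\le b_\alpha$ or at the rightmost one, which is dominated by $|\vp(1)|$ via Lemma~\ref{Sup_phi}. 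This global structural fact is precisely what your argument is missing. A secondary point: integrating from $x$ to $x'_{1,n}$ and applying H\"older with $\|\vp\|_{L^2(0,1)}=1$ yields $|\vp(x'_{1,n})|\le \dfrac{b_\alpha^{3/2}}{1-b_\alpha}\,\chi_{n,\alpha}(c)$, not the bound with $\sqrt{\chi_{n,\alpha}(c)}$ you wrote; the stated hypothesis then converts this into $3\sqrt{3/2}\,\sqrt{\chi_{n,\alpha}(c)}$.
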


\noindent
{\bf Proof:} We first recall that if $\alpha^2\leq \frac{1}{4},$ then $a_{\alpha}=0$ and the inequality \eqref{bound2} follows from proposition 1. Hence, it suffices to consider the case where $\alpha > 1/2.$ For this purpose, we use Butlewski's 
theorem, regarding the behaviour of the local extrema of the solution of a second order differential equation, see for example [\cite{Andrews}, p. 238]. More precisely, if $\phi $ is a solution of the  differential equation
\begin{equation}\label{Eq5.2.1}
\frac{d}{d t} \left(p(t)y'(t)\right)+q(t) y(t)=0,\quad t\in (a,b), 
\end{equation}
where $p(t)$ and $q(t)$ are two positive functions  belonging to $C'(a,b)$, then the local 
maxima of $\mid \phi \mid $ is increasing or decreasing, according to the condition that $p(t)q(t)$
is decreasing or increasing. In our case, we have 
$$p(t)= (1-t^2),\quad q(t)= q_n(t) = \chi_{n,\alpha}(c)-c^2 t^2-\frac{\alpha^2-1/4}{t^2},\quad t\in (0,1).$$
Since 
$$ \frac{d}{d t} (p(t) q_n(t))= (8 c^2 t^6- 4(\chi_{n,\alpha}(c)+c^2) t^4+4\alpha^2 -1)/(4t^3),$$
and  since $\chi_{n,\alpha}(c)\geq 2\alpha^2-1/2,$ then one  can easily check that there exists a unique real number
$t_{\alpha,n}\in \left[\sqrt{\frac{\alpha^2-1/4}{\chi_{n,\alpha}(c)}},\frac{1}{2}\right],$ so that the function 
$p(t) q_n(t)$ is increasing in $(0,t_{\alpha,n} )$ and decreasing in $(t_{\alpha,n}, 1).$ Hence, from Butlewski's theorem,
the local maxima of $|\vp|$ are decreasing in $(0,t_{\alpha,n} )$ and increasing in $(t_{\alpha,n}, 1).$ 
From the proof of proposition \ref{zero_location}, we know that the first zero of 
$(\vp)',$ denoted by $x'_{1,n}$ is located in $I_{\alpha}=[a_{\alpha}^2, b_{\alpha}],$ where $a_{\alpha}, b_{\alpha}$ are given by \eqref{firstzero}. Hence, by integrating \eqref{Diff_Equ2} over the interval $[x,x'_{1,n}],$ where $x\in I_{\alpha}$ and then using H\"older's inequality,  one gets 
\begin{eqnarray*}
(\vp)'(x) &=& \frac{-1}{1-x^2}\int_x^{x'_{1,n}} \left(-\chi_{n,\alpha}(c)+c^2 t^2 +\frac{\alpha^2-1/4}{t^2}\right)
\vp(t)\, dt,\quad a_{\alpha}^2\leq x\leq b_{\alpha}.
\end{eqnarray*}
On the other hand, from the expression of $(\vp(x))'',$ one can easily check that this later is positive whenever $0< x \leq a_{\alpha}^2.$ Consequently, we  have
$$|(\vp)'(x)| \leq \frac{\chi_{n,\alpha}(c)}{1-x^2} \int_x^{x'_{1,n}} \vp(t)\, dt,\quad 0< x < b_{\alpha}.$$
By using the expression of $b_{\alpha}$ as well as the conditions on $\chi_{n,\alpha}(c),$ together with H\"older's inequality applied to the above integral, one gets
$$|(\vp)'(x)| \leq \frac{\sqrt{b_{\alpha}}}{1-b_{\alpha}} \chi_{n,\alpha}(c).$$
Finally, since $\vp(0)=0$ and since $x'_{1,n}< b_{\alpha},$ then we have $$
|\vp(x'_{1,n})| < \frac{b_{\alpha}^{3/2}}{1-b_{\alpha}} \chi_{n,\alpha}(c) \leq 3 \sqrt{3/2}\sqrt{\chi_{n,\alpha}(c)}.$$
Finally, from the previous analysis, we have 
$$\sup_{x\in [0,1]}|\vp(x)| \leq \max\left(|\vp(x'_{1,n})|, |\vp(1)|\right)\leq 3 \sqrt{3/2}\sqrt{\chi_{n,\alpha}(c)},$$
which concludes the proof of the theorem.

The following theorem tells us that $\chi_{n,\alpha}(c)$ passes through $c^2+\alpha^2-\frac{1}{4}$ for
$n$ around $\frac{\sqrt{c^2+\alpha^2-1/4}}{\pi}.$

\begin{theorem} Consider two real numbers $c>0,$ $\alpha > -\frac{1}{2},$ with $c^2\geq \frac{1}{4}-\alpha^2,$
 then,\\
$(a)$ For any positive integer $n < \frac{c}{\pi}-\frac{\alpha}{2},$ we have  $\chi_{n,\alpha}(c)< c^2+\alpha^2-\frac{1}{4}.$\\
$(b)$ For any integer $n > \frac{\sqrt{c^2+\alpha^2-1/4}}{\pi} +\frac{5}{3},$ we have $\chi_{n,\alpha}(c)> c^2+\alpha^2-\frac{1}{4}.$
\end{theorem}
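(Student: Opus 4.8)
The plan is to read off both inequalities from Sturm oscillation and comparison theory applied to the normal form \eqref{modified}. Two ingredients drive everything. First, by the standard oscillation theory for the singular Sturm--Liouville problem attached to $-\mathcal D^{\alpha}_c$ with the boundedness conditions at $x=0$ and $x=1$, the eigenfunction $\vp$ has exactly $n$ zeros in $(0,1)$; equivalently $U_n=(1-x^2)^{1/2}\vp$, which solves \eqref{modified} and has the same zeros on $(0,1)$ (and in addition vanishes at both endpoints), has exactly $n$ interior zeros. Second, I would exploit the fact that at the threshold value $\chi_*:=c^2+\alpha^2-\tfrac14$ the potential of \eqref{modified} collapses, by a one-line computation, to
\[
\Theta_*(x)=\frac1{(1-x^2)^2}+c^2-\frac{\alpha^2-\tfrac14}{x^2},
\]
that is, to a \emph{strictly positive} perturbation ($(1-x^2)^{-2}\ge 1$) of the Bessel potential $c^2-\frac{\alpha^2-1/4}{x^2}$, whose bounded solution is $\sqrt{x}\,J_\alpha(cx)$ with zeros exactly at the points $j_{\alpha,k}/c$. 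Since the coefficient $\frac{\chi-c^2x^2}{1-x^2}$ in \eqref{modified} is increasing in $\chi$, the potential of \eqref{modified} is $\ge\Theta_*$ when $\chi_{n,\alpha}(c)\ge\chi_*$ and $\le\Theta_*$ when $\chi_{n,\alpha}(c)\le\chi_*$.

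For part $(a)$ I would argue by contraposition: assume $\chi_{n,\alpha}(c)\ge\chi_*$, so the potential of \eqref{modified} is $\ge\Theta_*(x)>c^2-\frac{\alpha^2-1/4}{x^2}$ on $(0,1)$. The Sturm Comparison Theorem applied to \eqref{modified} and $V''+\bigl(c^2-\frac{\alpha^2-1/4}{x^2}\bigr)V=0$ then places a zero of $U_n$ between any two consecutive zeros of $\sqrt{x}\,J_\alpha(cx)$ in $[0,1)$, whence $n\ge\#\{k\ge1:\ j_{\alpha,k}<c\}$. Combining this with the zero bound $j_{\alpha,k}<k\pi+\frac\pi2\alpha-\frac34$ of \eqref{Besselzero} (and squeezing the remaining integer part out of the strictly positive gain $(1-x^2)^{-2}$, equivalently out of the fact that $U_n$ also vanishes at $x=1$) yields $n\ge\frac c\pi-\frac\alpha2$, contradicting $n<\frac c\pi-\frac\alpha2$.

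For part $(b)$, again by contraposition, assume $\chi_{n,\alpha}(c)\le\chi_*$, so the potential of \eqref{modified} is $\le\Theta_*(x)$ on $(0,1)$. The sign of $h(x):=\chi_{n,\alpha}(c)-c^2x^2+\frac{1/4-\alpha^2}{x^2}$ tells where that potential drops below the \emph{non-oscillatory barrier} $(1-x^2)^{-2}$, for which $(1-x^2)^{1/2}$ is an explicit zero-free solution of $V''+(1-x^2)^{-2}V=0$ on $(0,1)$; by the Sturm Comparison Theorem, $\vp$ has at most one zero on each connected component of $\{x\in(0,1):\ h(x)\le0\}$. A short discussion of $h$ shows there are at most two such components --- one abutting $x=1$ and, only when $\alpha>\frac12$, one abutting $x=0$ (the degenerate case $h\le0$ on all of $(0,1)$ forces $n\le1$ and is trivial). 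On the complementary oscillatory interval, $\Theta$ is dominated by a constant of size $\chi_*$ up to a bounded correction, and comparison with the associated constant-coefficient (or Bessel-type) equation bounds the number of interior zeros there by $\frac{\sqrt{\chi_*}}\pi+O(1)$; adding the at most two endpoint zeros and carrying out the arithmetic gives $n\le\frac{\sqrt{c^2+\alpha^2-1/4}}\pi+\frac53$, contradicting the hypothesis on $n$.

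The structural part of the argument is short; the real work --- and the main obstacle --- is the quantitative bookkeeping needed to land exactly on $-\frac\alpha2$ in $(a)$ and $+\frac53$ in $(b)$. One must bound the number of zeros in the non-oscillatory endpoint layers \emph{uniformly} in $c,\alpha,n$ (near $x=1$ these are controlled by the $J_0(\sqrt{1-x}\,)$-type behaviour of the solutions of $W''+\Theta_*W=0$), dominate the weight $(1-x^2)^{-1}$ and the singular Bessel term on the oscillatory region by the correct constant, and handle the integer rounding, together with the routine split according to whether $\alpha^2\le\frac14$ (repulsive singular term) or $\alpha>\frac12$ (attractive singular term, producing the extra non-oscillatory layer near $x=0$).
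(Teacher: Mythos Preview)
For part~(a) your approach coincides with the paper's: contraposition, the Liouville normal form \eqref{modified}, and Sturm comparison with a Bessel equation of order~$\alpha$, followed by the zero bound \eqref{Besselzero}. Your packaging via the threshold potential $\Theta_*(x)=(1-x^2)^{-2}+c^2-\frac{\alpha^2-1/4}{x^2}$ is a clean way to see the structure, but the comparison potential you end up with, $c^2-\frac{\alpha^2-1/4}{x^2}$, is exactly what the paper uses (it writes $\chi-(\alpha^2-\tfrac14)$ in place of~$c^2$ and only substitutes $\chi\ge\chi_*$ at the end, which is equivalent).

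For part~(b) your decomposition differs from the paper's, and the difference matters. Your observation that the potential of \eqref{modified} equals $(1-x^2)^{-2}+\frac{h(x)}{1-x^2}$, and that $(1-x^2)^{1/2}$ is an explicit zero-free solution of $V''+(1-x^2)^{-2}V=0$, is correct and gives a tidy ``at most one zero per component of $\{h\le0\}$''. The gap is the oscillatory piece: you assert that on $\{h>0\}$ the potential is ``dominated by a constant of size $\chi_*$ up to a bounded correction'', but this is false exactly in the regime you need. The right endpoint $b$ of $\{h>0\}$ satisfies $h(b)=0$; when $\chi\uparrow\chi_*$ one has $b\uparrow1$ (e.g.\ for $\alpha=\tfrac12$, $b=\sqrt{\chi}/c$), so $(1-x^2)^{-2}$ is unbounded on $(a,b)$. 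Any constant majorant there must be at least $(1-b^2)^{-2}+c^2$, and the resulting zero count $\tfrac{b}{\pi}\sqrt{(1-b^2)^{-2}+c^2}$ is not $\tfrac{\sqrt{\chi_*}}{\pi}+O(1)$. Splitting at the zero set of~$h$ therefore does \emph{not} isolate the singularity at $x=1$ from the bulk; your non-oscillatory layer can be arbitrarily thin.

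The paper handles this by cutting at a \emph{free} point $1-\eta$. On $(0,1-\eta)$ it bounds $(1-x^2)^{-2}\le\eta^{-2}$ and compares with the constant-coefficient equation $V''+(\chi+\eta^{-2})V=0$, giving $n_\eta\le\frac{(1-\eta)}{\pi}\sqrt{\chi+\eta^{-2}}+1$. On $[1-\eta,1)$ it returns to the original form \eqref{initial}, uses $\chi\le\chi_*$ to get $r_1(x)\le 2(\chi+\tfrac{11}{20})(1-x)$ and $P_1(x)\ge(2-\eta)(1-x)$, and compares with a $J_0$-type equation in the variable $t=1-x$, yielding $n'_\eta\le\frac{\eta}{\pi}\sqrt{2(\chi+\tfrac{11}{20})/(2-\eta)}+\tfrac14$. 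The choice $\eta=\chi^{-1/4}$ balances $\eta^{-2}\chi^{-1/2}$ against the boundary contribution and delivers the constant~$\tfrac53$. Your closing $J_0(\sqrt{1-x}\,)$ remark is the right ingredient, but it has to be applied on a layer of \emph{chosen} width (and then optimized), not on $\{h\le0\}$.
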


\begin{proof} To alleviate notation, we let $\varphi,$ $\chi$ denote the eigenfunction $\vp$ and its associated eigenvalue
$\chi_{n,\alpha}(c),$ respectively.
We want to prove that solutions on $(0, 1)$ of the differential equation
\begin{equation}\label{initial}
((1-x^2)\varphi')'+\left(\chi-c^2x^2+\frac{\frac{1}{4}-\alpha^2}{x^2}\right)\varphi=(P_1(x)\varphi')'+r_1(x) \varphi=0,
\end{equation}
have at least $\frac{\sqrt{\chi-(\alpha^2-1/4)}}{\pi}-\frac{\alpha}{2}$ zeros. As it is done in the proof of proposition 2, the change of function $U=(1-x^2)^{1/2}\varphi$ leads to the equation for $U$, given by \eqref{modified}.
Since $\chi \geq c^2+\alpha^2-\frac{1}{4},$ then from the Sturm comparison theorem, the number of zeros of $\vp$ is bounded below by the number of zeros of the function $V(\cdot),$ given by \eqref{functionV}. Since a bound of the $k-$th zero of the Bessel function $J_{\alpha}(\cdot)$ is given by \eqref{Besselzero}, then $n,$ the number of zeros of $\vp$ is bounded below by ${\displaystyle \left[\frac{\sqrt{\chi_{n,\alpha}(c)-(\alpha^2-1/4)}}{\pi}-\frac{\alpha}{2}\right]}.$ Finally, to conclude the proof of (a), it suffices to note that  $\chi \geq c^2+\alpha^2-\frac{1}{4}$ and use the previous bound below of the number of zeros $n.$\\

Next to prove (b), we divide the interval $(0, 1)$ into the two subintervals $(0,1-\eta),$ $[1-\eta,1)$ with
$\eta\in (0,\frac{1}{2})$ to be fixed later on.
We first bound the number $n_\eta$ of zeros of $\varphi$ or of $U$, in the interval $(0, 1-\eta)$. Since for
$0<x<1-\eta,$ we have $(1-x^2)^{-2}\leq \eta^{-2},$ then by using the Sturm-Liouville comparison theorem applied to
\eqref{initial} and the differential equation
\begin{equation}\label{modified2}
V''(x)+(\chi+\eta^{-2}) V(x)=0,\quad x\in (0,1-\eta),
\end{equation}
one concludes that
\begin{equation}\label{number1}
n_{\eta}\leq \frac{(1-\eta) \sqrt{\chi+\eta^{-2}}}\pi+1.
\end{equation}
It remains to find a bound for $n'_{\eta}=n-n_{\eta}.$ We now compare  the equation \eqref{initial} with
an appropriate second order differential equation on the interval $[1-\eta,1).$ We may assume that $\eta\leq 1-\sqrt{\frac{5}{6}}.$  Since in this last interval, we have $P_1(x)=(1+x)(1-x)\geq (2-\eta)(1-x)$ and since  $\chi\leq c^2+\alpha^2-\frac{1}{4}$ and $\frac{1+x^2}{x^2}\leq \frac{11}{5},$ then we have
\begin{eqnarray*}
r_1(x) &=&\chi-c^2 x^2 +\frac{\frac{1}{4}-\alpha^2}{x^2}\leq \chi(1-x^2)+\left(\frac{1}{4}-\alpha^2\right)\left(\frac{1}{x^2}-x^2\right)\\
&\leq&\chi (1-x^2)+(1/4-\alpha^2)(1-x^2) \frac{1+x^2}{x^2}\\
&\leq &(\chi+11/20)(1-x^2)\leq 2(\chi+11/20) (1-x)=r_2(x).
\end{eqnarray*}
Hence, we use the Sturm-Liouville comparison theorem applied to the equation \eqref{initial} and the following
equation
\begin{equation}\label{modified3}
(2-\eta)((1-x)U')'+2(\chi+11/20)(1-x)U=0,\quad x\in (1-\eta,1).
\end{equation}
The previous equation is rewritten as
$$ U'' -\frac{U'}{1-x}+\frac{2(\chi+11/20)}{2-\eta} U=0,\quad x\in (1-\eta,1).$$
If we let $v(x)=U(1-x)$ and take $t=1-x$ as a new variable,  then the previous equation is reduced
to the Bessel equation with solution
$v(t)=J_0(b t)$ on $(0, \eta)$, with $b^2=\frac{2\chi+11/10}{2-\eta}.$ Moreover, since from [\cite{Watson}, p.489],
 the $m-$th zeros of $J_0(x)$ lies in the interval $\left( (m+\frac{3}{4})\pi, (m+\frac{7}{8})\pi\right),$ then $v(t)$ has at most
 ${\displaystyle \left[\sqrt{\frac{2(\chi+11/20)}{2-\eta}}\frac{\eta}{\pi}-\frac{3}{4}\right]}$ zeros in $(0,\eta).$
 By using Sturm comparison theorem, one concludes that $n'_{\eta},$ the number of zeros of $\varphi$ in $(1-\eta,1)$ is bounded as follows,
 $$n'_{\eta}\leq \left[\sqrt{\frac{2(\chi+11/20)}{2-\eta}}\frac{\eta}{\pi}-\frac{3}{4}\right]+1\leq \frac{1}{\pi}\eta\sqrt{\frac{2(\chi+11/20)}{2-\eta}}+\frac{1}{4}.$$
 Straightforward manipulations show that
 \begin{eqnarray*}
 n= n_{\eta}+n'_{\eta}&\leq&\frac{1-\eta}{\pi}\sqrt{\chi}(1+\frac{1}{2}\eta^{-2}\chi^{-1})+1+\frac{\eta}{\pi}\sqrt{\frac{2}{2-\eta}(\chi+11/20)}+1/4\\
 &\leq & \frac{\sqrt{\chi}}{\pi}+\frac{1}{2\pi}\eta^{-2}\chi^{-1/2}+5/4+\frac{\eta}{\pi}\left(\frac{11/10+\eta \chi}{2\sqrt{\chi}}\right)
 \end{eqnarray*}
since $n\geq 1,$ then from \eqref{bounds_chi}, we have  $\chi\geq 6.$ Moreover, by choosing $\eta=\chi^{-\frac{1}{4}},$ one gets 
 $$\displaystyle\ n= n_{\eta}+n'_{\eta}\leq \frac{\sqrt{\chi}}{\pi}+ \frac{1+\frac{11}{20}6^{-3/4}}{\pi}+\frac{5}{4} \leq \frac{\sqrt{\chi}}{\pi}+\frac{5}{3},$$
 that is ${\displaystyle \sqrt{\chi_{n,\alpha}(c)} \geq \pi (n-5/3),}$ which allows us to conclude for (b).
  \end{proof}

\section{Eigenvalues behaviour and decay of the finite Hankel transform operator}

In this paragraph, we prove an important property of the eigenvalues $\lambda_{n,\alpha}(c),$ that is for fixed
integer $n\geq 0$ and real numbers  $c>0,$   $\alpha > \alpha' >-1/2,$ we have $\lambda_{n,\alpha'}(c) < \lambda_{n,\alpha}(c).$
To prove this result, we need the following Paley-Wiener theorem for the Hankel transform, given by J. L. Griffith in  \cite{Griffith}.

\begin{theorem}[\cite{Griffith}]
Let ${\displaystyle \alpha >  -\frac{1}{2}}$ and $p, q >0$ with ${\displaystyle \frac{1}{p}+\frac{1}{q}=1.}$ Let $f$ be an even function of exponential type 1. If $1< p \leq 2$ and ${\displaystyle t^{\alpha +1/2} f(t) \in L^p(0,+\infty)},$ then
$f$ can be represented by
$$f(z)= \int_0^1 \left(xz\right)^{-\alpha} J_{\alpha}(xz) \phi(x) \, dx,\quad z\in \mathbb C,$$
with $x^{-\alpha-1/2} \phi(x) \in L^q(0,1).$ Conversely, if $f$ has this representation and $x^{-\alpha-1/2} \phi(x) \in L^p(0,1),$ $ 1< p \leq 2,$ then $f$ is an even entire function of exponential type $1$ such that $t^{\alpha+1/2} f(t) \in L^q (0,\infty).$
\end{theorem}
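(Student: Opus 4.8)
The statement is a Paley--Wiener theorem for the Hankel transform, and the plan is to separate it into two independent mechanisms: a \emph{support $\leftrightarrow$ exponential type} equivalence, handled by reducing the Hankel kernel to the Fourier kernel, and an $L^p\leftrightarrow L^q$ \emph{integrability} transfer, handled by a Hausdorff--Young inequality for the Hankel transform. The starting point is the Poisson representation of the Bessel function, valid for $\alpha>-\frac12$,
$$J_{\alpha}(w)=\frac{(w/2)^{\alpha}}{\sqrt{\pi}\,\Gamma(\alpha+\tfrac12)}\int_{-1}^{1}(1-t^2)^{\alpha-1/2}e^{iwt}\,dt,$$
which gives, upon setting $w=xz$, the identity $(xz)^{-\alpha}J_{\alpha}(xz)=\kappa_{\alpha}\int_{-1}^{1}(1-t^2)^{\alpha-1/2}e^{ixzt}\,dt$ with $\kappa_{\alpha}=2^{-\alpha}/(\sqrt\pi\,\Gamma(\alpha+\tfrac12))$. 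Substituting this into the candidate representation and exchanging the order of integration (with $u=xt$) shows that any $f$ of the claimed form is an ordinary Fourier transform $f(z)=\int_{-1}^{1}e^{iuz}g(u)\,du$ of a function $g$ supported in $[-1,1]$, where, writing $\psi(x)=x^{-\alpha-1/2}\phi(x)$ for the weighted density in the statement,
$$g(u)=\kappa_{\alpha}\int_{|u|}^{1}x^{-\alpha+1/2}(x^2-u^2)^{\alpha-1/2}\,\psi(x)\,dx.$$
Thus $g$ is an Erd\'elyi--Kober (Abel-type) fractional integral of order $\alpha+\frac12$ of $\psi$. A second, purely algebraic, observation is that in the symmetric normalization $\mathcal H^{\alpha}$ of \eqref{Hankel_transform} one has $\mathcal H^{\alpha}(\psi)(t)=t^{\alpha+1/2}f(t)$ whenever $\psi$ is supported in $[0,1]$; this recasts the weighted hypothesis $t^{\alpha+1/2}f\in L^p$ as a membership statement for a genuine Hankel transform.

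For the \textbf{converse direction}, assume the representation with $\psi\in L^p(0,1)$, $1<p\le 2$. Evenness and entireness of $f$ follow from differentiating under the integral sign, since $(xz)^{-\alpha}J_{\alpha}(xz)$ is an even entire function of $z$; the Poisson bound $|(xz)^{-\alpha}J_{\alpha}(xz)|\le C_{\alpha}e^{x|\Im z|}\le C_{\alpha}e^{|\Im z|}$ for $x\in[0,1]$ shows that $f$ is of exponential type at most $1$. For the integrability of $t^{\alpha+1/2}f=\mathcal H^{\alpha}(\psi)$, I would invoke a Hausdorff--Young inequality for $\mathcal H^{\alpha}$: the kernel $\sqrt{xy}\,J_{\alpha}(xy)$ is uniformly bounded on $(0,\infty)^2$ for $\alpha\ge-\frac12$ (because $\sqrt w\,J_\alpha(w)$ is bounded on $(0,\infty)$), so $\mathcal H^{\alpha}\colon L^1\to L^\infty$ is bounded, while $\mathcal H^{\alpha}\colon L^2\to L^2$ is an isometry; Riesz--Thorin interpolation then gives $\mathcal H^{\alpha}\colon L^p\to L^q$ bounded for $1\le p\le 2$. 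Hence $\psi\in L^p$ yields $t^{\alpha+1/2}f=\mathcal H^{\alpha}(\psi)\in L^q(0,\infty)$, as required.

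For the \textbf{forward direction}, assume $f$ is even of exponential type $1$ with $t^{\alpha+1/2}f\in L^p(0,\infty)$, $1<p\le 2$. First I would check that in fact $f\in L^p(\mathbb R)$: for $t\ge 1$ the weight $t^{\alpha+1/2}\ge 1$ gives $|f(t)|\le|t^{\alpha+1/2}f(t)|$, so $f\in L^p(1,\infty)$, while near $0$ the even entire function $f$ is bounded, so $f\in L^p(0,1)$; evenness then gives $f\in L^p(\mathbb R)$. The classical $L^p$ Paley--Wiener (Plancherel--P\'olya) theorem for $1<p\le 2$ now yields that the Fourier transform $g=\widehat f$ is supported in $[-1,1]$ and, by Hausdorff--Young, lies in $L^q$. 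Comparing with the Poisson identity above expresses this same $g$ as the fractional integral of $\psi=\mathcal H^{\alpha}(t^{\alpha+1/2}f)$; since $g$ vanishes for $|u|>1$, the injectivity of the Abel operator on $(1,\infty)$ forces $\psi$ to vanish for $x>1$, that is, $\psi$ is supported in $[0,1]$. Finally, because $\psi$ is supported in $[0,1]$ we may write $\psi=\mathcal H^{\alpha}(\mathcal H^{\alpha}\psi)=\mathcal H^{\alpha}(t^{\alpha+1/2}f)$, and the Hausdorff--Young inequality above applied to $t^{\alpha+1/2}f\in L^p$ gives $\psi\in L^q(0,1)$, i.e.\ $x^{-\alpha-1/2}\phi\in L^q(0,1)$.

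The main obstacle is the forward support step: one must rigorously justify the Fubini interchange in the Poisson reduction under only the $L^p$ integrability of $\psi$, identify $g=\widehat f$ with the Erd\'elyi--Kober fractional integral of $\psi$, and then prove the required injectivity, namely that a fractional (Abel) integral of order $\alpha+\frac12$ vanishing on $(1,\infty)$ forces its density to vanish there. Controlling the endpoint weights and the borderline integrability of the kernel $(x^2-u^2)^{\alpha-1/2}$ when $-\frac12<\alpha<\frac12$ (where the exponent is negative) is the delicate technical point; the supporting facts---uniform boundedness of $\sqrt w\,J_\alpha(w)$, the classical $L^p$ Paley--Wiener theorem, and the mapping properties of the fractional operators---are otherwise standard.
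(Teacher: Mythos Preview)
The paper does not prove this theorem: it is quoted verbatim from Griffith \cite{Griffith} and used as a black box to establish Lemma~\ref{lem1}. There is therefore no ``paper's own proof'' to compare your proposal against.

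That said, your outline is broadly in the spirit of how such Paley--Wiener theorems are proved (and is close to Griffith's original argument): the Poisson representation of $J_\alpha$ to reduce to the Fourier side, a Hausdorff--Young inequality for $\mathcal H^\alpha$ obtained by interpolating the $L^1\to L^\infty$ bound (from the uniform boundedness of $\sqrt{w}\,J_\alpha(w)$) with the $L^2$ isometry, and the classical $L^p$ Paley--Wiener theorem on the Fourier side. The converse direction as you wrote it is essentially complete.

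The forward direction has a genuine soft spot that you flag but do not resolve. You define $\psi=\mathcal H^\alpha(t^{\alpha+1/2}f)\in L^q(0,\infty)$ and then assert that the Fourier transform $g=\widehat f$ coincides with the Erd\'elyi--Kober fractional integral of $\psi$; but the Poisson computation that produces this identity was carried out \emph{assuming} $\psi$ is already supported in $[0,1]$, so invoking it here is circular. What is actually needed is either a direct argument (e.g.\ a Sonine-type identity relating $\mathcal H^\alpha$ and the Fourier--cosine transform via a fractional integral, valid on all of $(0,\infty)$, together with an injectivity statement for the fractional operator) or the approach Griffith himself uses, which bypasses the fractional-integral machinery. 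Your sketch is plausible but would require real work at precisely this step to become a proof.
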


By using the previous theorem, we prove the following lemma that compares two Paley-Wiener spaces for Hankel band-limited functions. We should mention that the previous theorem is still valid if the the interval $(0,1)$ is substituted with the interval $(0,c).$

\begin{lemma}
\label{lem1}
let $\alpha\geq  \alpha' > -\frac{1}{2}$  be two real numbers,  then the Hankel Paley-Wiener spaces  $B_c^\alpha $ and $B_c^{\alpha'}$ satisfy the following inclusion relation,
\begin{equation}\label{Paley_inclusion}
\mathcal B_c^\alpha\subset x^{\alpha-\alpha'} \cdot   \mathcal B_c^{\alpha'},\quad \alpha>\alpha'.
\end{equation}
Here, $\mathcal B_c^\alpha$ is as given by \eqref{Paley_space}.
\end{lemma}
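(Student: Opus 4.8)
The inclusion $\mathcal B_c^\alpha\subset x^{\alpha-\alpha'}\cdot\mathcal B_c^{\alpha'}$ is to be read as: every $f\in\mathcal B_c^\alpha$ can be written as $f(x)=x^{\alpha-\alpha'}g(x)$ for some $g\in\mathcal B_c^{\alpha'}$. The natural approach is to pass through the Griffith representation. Given $f\in\mathcal B_c^\alpha$, by definition $f\in L^2(0,\infty)$ and $\mathcal H^\alpha(f)$ is supported in $[0,c]$. I would first write, using the Hankel inversion formula (the Hankel transform is its own inverse) and the elementary identity $\sqrt{xz}\,J_\alpha(xz)=(xz)^{\alpha+1/2}\cdot\big((xz)^{-\alpha}J_\alpha(xz)\big)$,
\begin{equation}\label{repf}
f(z)=\int_0^c\sqrt{xz}\,J_\alpha(xz)\,\mathcal H^\alpha(f)(x)\,dx = z^{\alpha+1/2}\int_0^c (xz)^{-\alpha}J_\alpha(xz)\,x^{\alpha+1/2}\mathcal H^\alpha(f)(x)\,dx .
\end{equation}
Thus, up to the harmless normalisation factor $z^{\alpha+1/2}$ and a rescaling $x\mapsto x/c$, $f$ is (a power of $z$ times) a function of the type appearing in Griffith's theorem with parameter $\alpha$, exponential type $c$, and with $\phi(x)=x^{\alpha+1/2}\mathcal H^\alpha(f)(x)\in L^2(0,c)$ (here $p=q=2$).

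**Key steps.** The core of the argument is then: a function represented by a Griffith integral with parameter $\alpha$ is also, after extracting the factor $z^{\alpha-\alpha'}$, represented by a Griffith integral with the smaller parameter $\alpha'$. Concretely, I would like to produce $\psi\in L^2(0,c)$ with $x^{-\alpha'-1/2}\psi(x)\in L^2(0,c)$ such that
\begin{equation}\label{target}
z^{-(\alpha+1/2)}f(z)=z^{\alpha-\alpha'}\int_0^c (xz)^{-\alpha'}J_{\alpha'}(xz)\,\psi(x)\,dx .
\end{equation}
The passage from order $\alpha$ to order $\alpha'$ at the level of the kernel is governed by the classical Sonine-type integral formula
$$ (xz)^{-\alpha}J_\alpha(xz)=\frac{2\,x^{-2\alpha}}{\Gamma(\alpha-\alpha')}\int_0^x (x^2-u^2)^{\alpha-\alpha'-1}u^{2\alpha'+1}(uz)^{-\alpha'}J_{\alpha'}(uz)\,du,\qquad \alpha>\alpha', $$
which I would insert into \eqref{repf} and then interchange the order of integration (justified because everything is in $L^2$ on a finite interval and the fractional kernel $(x^2-u^2)^{\alpha-\alpha'-1}$ is integrable). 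Collecting the $u$-integral gives exactly a representation of the form \eqref{target} with an explicitly computable $\psi$, namely a fractional (Erdélyi–Kober type) transform of $\phi$; the claim $x^{-\alpha'-1/2}\psi(x)\in L^2(0,c)$ then follows from the boundedness of this fractional operator on weighted $L^2$ of a finite interval. Finally, applying the converse direction of Griffith's theorem to \eqref{target} shows $z^{-(\alpha-\alpha')}\big(z^{-(\alpha+1/2)}f(z)\big)$, i.e. a fixed power of $z$ times $g$, has Hankel transform (of order $\alpha'$) supported in $[0,c]$, which is the assertion $f=x^{\alpha-\alpha'}g$ with $g\in\mathcal B_c^{\alpha'}$.

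**Main obstacle.** The routine parts are the Hankel inversion in \eqref{repf} and bookkeeping of the $z^{\alpha+1/2}$ normalisation factors that relate the ``$L^2$'' normalisation used for $\mathcal B_c^\alpha$ to the ``$(xz)^{-\alpha}J_\alpha$'' normalisation used in Griffith's theorem. The genuinely delicate point is the Fubini interchange and the verification that the resulting density $\psi$ lies in the correct weighted $L^2$ space — i.e. that the fractional integration step does not destroy square-integrability near $x=0$, where the weights $x^{2\alpha'+1}$, $x^{-2\alpha}$ and $x^{-\alpha'-1/2}$ all interact. I expect this to reduce, after the change of variables $u=xt$, to an elementary one-dimensional Hardy-type inequality for the operator $\phi\mapsto\int_0^1(1-t^2)^{\alpha-\alpha'-1}t^{2\alpha'+1}\phi(xt)\,dt$ on $L^2$ with power weights, which is bounded precisely because $\alpha>\alpha'$; once that inequality is in hand, both the direct inclusion and the endpoint case $\alpha=\alpha'$ (where the statement is trivial) follow, and the proof of the lemma is complete.
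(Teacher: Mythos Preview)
Your overall plan is sound but takes a more laborious constructive route than the paper, and it contains an algebraic slip. Carrying out the Sonine substitution in your representation of $z^{-(\alpha+1/2)}f(z)$ and interchanging the integrals as you describe actually yields
\[
z^{-(\alpha+1/2)}f(z)=\int_0^c (uz)^{-\alpha'}J_{\alpha'}(uz)\,\psi(u)\,du,
\]
\emph{without} the extra prefactor $z^{\alpha-\alpha'}$ that you inserted in your ``target'' identity; with that spurious factor your final bookkeeping would produce $f\in x^{2(\alpha-\alpha')}\mathcal B_c^{\alpha'}$ rather than the claimed inclusion. Once the factor is removed, multiplying by $z^{\alpha'+1/2}$ gives $z^{\alpha'-\alpha}f=\mathcal H^{\alpha'}\big(u^{-\alpha'-1/2}\psi\cdot\mathbf 1_{[0,c]}\big)$ directly, and no further appeal to Griffith's theorem is needed --- only the verification that $u^{-\alpha'-1/2}\psi\in L^2(0,c)$, which is the Erd\'elyi--Kober boundedness you correctly identify as the crux.

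The paper's argument is noticeably shorter and avoids Sonine's formula, the Fubini step, and the Hardy-type inequality altogether. It applies Griffith's theorem \emph{twice} as a black box: first with parameter $\alpha$, in the direction ``density $\Rightarrow$ entire function'', to deduce that $g:=x^{-\alpha-1/2}f$ is an even entire function of exponential type $c$; then it observes (using that $g$ is bounded near $0$ and that $\alpha'<\alpha$ controls the tail at $\infty$) that $x^{\alpha'+1/2}g=x^{\alpha'-\alpha}f\in L^2(0,\infty)$; finally it applies Griffith's theorem with parameter $\alpha'$ in the direction ``entire function $\Rightarrow$ density'' to produce $\varphi$ with $x^{-\alpha'-1/2}\varphi\in L^2(0,c)$, which is exactly the statement $x^{\alpha'-\alpha}f\in\mathcal B_c^{\alpha'}$. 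Your constructive route has the merit of exhibiting the $\alpha'$-density explicitly as a fractional integral of $\mathcal H^\alpha(f)$, but at the cost of an analytic estimate the paper bypasses entirely.
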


\begin{proof}
Since $f\in \mathcal B_c^\alpha,$ then for  $x\ge 0,$ we have
\begin{eqnarray*}
  f(x)&=&\int_0^c\sqrt{xy}J_\alpha(xy) \mathcal H^\alpha(f)(y)dy
      = x^{\alpha+\frac{1}{2}}\int_0^c(xy)^{-\alpha}J_\alpha(xy)y^{\alpha+\frac{1}{2}} \mathcal H^\alpha(f)(y)dy.
\end{eqnarray*}
It follows that
$$x^{-\alpha-\frac{1}{2}}f(x)=\int_0^c(xy)^{-\alpha}J_\alpha(xy)y^{\alpha+\frac{1}{2}} \mathcal H^\alpha(f)(y)dy.$$
Let $\phi(y)=y^{\alpha+\frac{1}{2}} \mathcal H^\alpha(f)(y),$ then $y^{-\alpha-\frac{1}{2}}\phi(y)\in L^2[0,c]$. By using the previous Griffith's theorem with $p=q=2,$ one concludes that the function
$g= x^{-\alpha-\frac{1}{2}} f$ is an even entire function of exponential type 1. Moreover, since
$f=x^{\alpha+\frac{1}{2}} g\in L^2(0,+\infty)$ and since $\alpha>\alpha'> -\frac{1}{2},$ then
we have
\begin{equation}\label{1}
    x^{\alpha'+\frac{1}{2}}g \in L^2(0,+\infty).
\end{equation}
Again by using  Griffith's theorem, one concludes that  there exists a function $\varphi$ such that $x^{-\alpha'-\frac{1}{2}}\varphi\in L^2[0,c]$ and
$$g(x)=\int_0^c(xy)^{-\alpha'}J_{\alpha'}(xy)\varphi(y)dy.$$
Hence $$x^{\alpha'+1/2}g(x)=\int_0^c\sqrt{xy}J_{\alpha'}(xy)y^{-\alpha'-\frac{1}{2}}\varphi(y)dy.$$
It follows from (\ref{1}) that $x^{\alpha'+1/2} g =x^{\alpha'-\alpha}f\in L^2(0,+\infty)$ and $\displaystyle \mathcal H^\alpha(x^{\alpha'-\alpha}f)=x^{-\alpha'-\frac{1}{2}}\varphi 1_{[0,c]}.$ That is
 $x^{\alpha'-\alpha}f\in \mathcal B_c^{\alpha'}$ and  $f\in x^{\alpha-\alpha'} \mathcal B_c^{\alpha'}.$
\end{proof}

By using the previous lemma, we show that for a fixed integer $n\geq 0,$ the eigenvalues $\lambda_{n,\alpha}(c)$ is decreasing with respect to the parameter $\alpha >-\frac{1}{2}.$ This unexpected  result is one of the main results of this work and it is given by the following theorem.

\begin{theorem}\label{monotony}
Let $\big(\lambda_{n,\alpha}(c)\big)_{n\ge0}$ be the sequence of the eigenvalues of the operator
$\mathcal Q_c^{\alpha}= c \, \mathcal H_c^{\alpha}\mathcal H_c^{\alpha},$ then for any integer $n\geq 0,$ we have
\begin{equation}\label{Monotony}
\lambda_{n,\alpha}(c) \leq \lambda_{n,\alpha'}(c),\quad \forall\, \alpha \geq \alpha' > -\frac{1}{2}.
\end{equation}
\end{theorem}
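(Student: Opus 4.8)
The plan is to use the variational (min--max) characterization of the eigenvalues $\lambda_{n,\alpha}(c)$ of the positive self-adjoint operator $\mathcal Q_c^\alpha$, combined with the inclusion of Paley--Wiener spaces from Lemma \ref{lem1}. Recall from Section 2 that
$$\lambda_{n,\alpha}(c)=\max_{\substack{V\subset \mathcal B_c^\alpha\\ \dim V=n+1}}\ \min_{0\ne f\in V}\ \frac{\|f\|_{L^2(0,1)}^2}{\|f\|_{L^2(0,\infty)}^2},$$
since the quadratic form attached to $\mathcal Q_c^\alpha$ acting on $\mathcal H^\alpha(f)$ equals $\|f\|_{L^2(0,1)}^2/\|f\|_{L^2(0,\infty)}^2$ for $f\in\mathcal B_c^\alpha$. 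So the whole problem reduces to comparing this Rayleigh quotient on $\mathcal B_c^\alpha$ with the corresponding one on $\mathcal B_c^{\alpha'}$, where $\alpha\ge\alpha'$.

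First I would fix an optimal $(n+1)$-dimensional subspace $V\subset\mathcal B_c^\alpha$ realizing $\lambda_{n,\alpha}(c)$, i.e.\ spanned by $\varphi^{(\alpha)}_{0,c},\dots,\varphi^{(\alpha)}_{n,c}$. By Lemma \ref{lem1}, every $f\in V$ can be written $f=x^{\alpha-\alpha'}h$ with $h\in\mathcal B_c^{\alpha'}$; this gives a linear map $T\colon V\to\mathcal B_c^{\alpha'}$, $f\mapsto x^{\alpha'-\alpha}f$, which is injective (multiplication by a nonzero function), so $W:=T(V)\subset\mathcal B_c^{\alpha'}$ is again $(n+1)$-dimensional and admissible in the min--max for $\lambda_{n,\alpha'}(c)$. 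Therefore
$$\lambda_{n,\alpha'}(c)\ \ge\ \min_{0\ne h\in W}\frac{\|h\|_{L^2(0,1)}^2}{\|h\|_{L^2(0,\infty)}^2}
=\min_{0\ne f\in V}\frac{\|x^{\alpha'-\alpha}f\|_{L^2(0,1)}^2}{\|x^{\alpha'-\alpha}f\|_{L^2(0,\infty)}^2}.$$
The key pointwise fact is that $x\mapsto x^{\alpha'-\alpha}$ is a nonnegative, nonincreasing weight on $(0,\infty)$ (since $\alpha-\alpha'\ge0$). Intuitively, multiplying by a weight that is larger near $0$ and smaller near $\infty$ should only increase the relative concentration of $f$ on $[0,1]$, hence increase the Rayleigh quotient pointwise; formally I would show $\|x^{\alpha'-\alpha}f\|_{L^2(0,1)}^2/\|x^{\alpha'-\alpha}f\|_{L^2(0,\infty)}^2\ge \|f\|_{L^2(0,1)}^2/\|f\|_{L^2(0,\infty)}^2$ for each fixed $f$, equivalently (after clearing denominators and writing $w(x)=x^{2(\alpha'-\alpha)}$)
$$\int_0^1 w\,|f|^2\int_1^\infty |f|^2\ \ge\ \int_0^1 |f|^2\int_1^\infty w\,|f|^2,$$
which holds because $w(x)\ge w(1)$ for $x\le1$ and $w(x)\le w(1)$ for $x\ge1$ (split each side, bound $w$ by $w(1)$ on the appropriate range, and the cross terms cancel). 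Taking the minimum over $0\ne f\in V$ then gives $\lambda_{n,\alpha'}(c)\ge\lambda_{n,\alpha}(c)$, as desired.

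The main obstacle I anticipate is a technical one rather than conceptual: making sure the min--max / variational characterization is applied correctly on the infinite-dimensional space $\mathcal B_c^\alpha$, and that the transport map $T$ indeed lands inside $\mathcal B_c^{\alpha'}$ with the right finite dimension --- this is exactly what Lemma \ref{lem1} is designed to supply, but one must check that $T$ is well-defined on \emph{all} of $V$ (not just on a single eigenfunction) and injective, and that the subspace $W=T(V)$ is genuinely $(n+1)$-dimensional and consists of $L^2(0,\infty)$ functions in $\mathcal B_c^{\alpha'}$. One subtlety is that Lemma \ref{lem1} is stated as an inclusion $\mathcal B_c^\alpha\subset x^{\alpha-\alpha'}\mathcal B_c^{\alpha'}$, so one should verify the representation $f=x^{\alpha-\alpha'}h$ is unique (it is, since $x^{\alpha-\alpha'}$ vanishes only at $x=0$) so that $T$ is genuinely a function. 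Once these points are in place, the weighted-quotient inequality is an elementary manipulation, and the monotonicity follows immediately.
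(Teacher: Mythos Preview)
Your proof is correct and follows essentially the same route as the paper's: both combine the min--max characterization of $\lambda_{n,\alpha}(c)$ with Lemma~\ref{lem1} and the elementary monotone-weight inequality for the concentration ratio $\|f\|_{L^2(0,1)}^2/\|f\|_{L^2(0,\infty)}^2$. The only cosmetic difference is that the paper bounds $\lambda_{n,\alpha}(c)$ from above by enlarging the admissible class via the inclusion $\mathcal B_c^\alpha\subset x^{\alpha-\alpha'}\mathcal B_c^{\alpha'}$ and then using the inequality in the form $\|x^{\alpha-\alpha'}g\|_{L^2(0,1)}^2/\|x^{\alpha-\alpha'}g\|_{L^2(0,\infty)}^2\le \|g\|_{L^2(0,1)}^2/\|g\|_{L^2(0,\infty)}^2$, whereas you bound $\lambda_{n,\alpha'}(c)$ from below by transporting the optimal subspace via $f\mapsto x^{\alpha'-\alpha}f$; the two weighted-quotient inequalities are equivalent (substitute $g=x^{\alpha'-\alpha}f$).
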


\begin{proof}
We first recall that if $A$ is a self-adjoint compact operator on a Hilbert space $H,$ with positive eigenvalues $(\lambda_n)_n$ arranged in decreasing order, then by Min-Max theorem, we have
$${\displaystyle \lambda_k= \max_{S_k} \min_{x\in S_k} \frac{< Ax,x>}{\|x\|^2},}$$
where $S_k$ is a subspace of $H$ of dimension $k.$ In the special case where $H=\mathcal B_c^{\alpha},$  $A= \mathcal Q_c^{\alpha}$ and by using the discussion given in section 2, that  relates the energy maximization problem to the eigenvalues $\lambda_{n,\alpha}(c),$ one concludes that
\begin{equation*}
\lambda_{n,\alpha}(c)=\left\{
\begin{gathered}
\max_{f\in \mathcal B_c^{\alpha}}\frac{||f||^2_{L^2[0,1]}}{||f||^2_{L^2(0,+\infty)}},\mbox{ if } n=0\\
\max_{S_n\subset \mathcal B_c^{\alpha}}\min_{f\in S_n}\frac{||f||^2_{L^2[0,1]}}{||f||^2_{L^2(0,+\infty)}},\mbox{ if } n\ge1,
\end{gathered}\right.
\end{equation*}
where the $S_n$ are subspaces of $\mathcal B_c^{\alpha}$ of dimensions $n$.
Next, let $\alpha>{\alpha'}>-\frac{1}{2},$ then by using Lemma \ref{lem1}, we get $$\displaystyle\lambda_{0,\alpha}(c)\le\max_{f\in x^{\alpha-\alpha'} \mathcal B_c^{\alpha'}}\frac{||f||^2_{L^2[0,1]}}{||f||^2_{L^2(0,+\infty)}}=\displaystyle\max_{f\in \mathcal B_c^{\alpha'}}\frac{||x^{\alpha-\alpha'}f||^2_{L^2[0,1]}}{||x^{\alpha-\alpha'}f||^2_{L^2(0,+\infty)}}.$$
On the other hand, for $f\in \mathcal B_c^{\alpha'},$ we have $$\frac{||x^{\alpha-\alpha'}f||^2_{L^2(0,+\infty)}}{||x^{\alpha-\alpha'}f||^2_{L^2[0,1]}}=1+\frac{||x^{\alpha-\alpha'}f||^2_{L^2[1,+\infty)}}{||x^{\alpha-\alpha'}f||^2_{L^2[0,1]}}\ge 1+\frac{||f||^2_{L^2[1,+\infty)}}{||f||^2_{L^2[0,1]}}=\frac{||f||^2_{L^2(0,+\infty)}}{||f||^2_{L^2[0,1]}},$$
which implies that
\begin{equation}\label{2}
 \frac{||x^{\alpha-\alpha'}f||^2_{L^2[0,1]}}{||x^{\alpha-\alpha'}f||^2_{L^2[0,+\infty)}}\le \max_{f\in \mathcal B_c^{\alpha'}}\frac{||f||^2_{L^2[0,1]}}{||f||^2_{L^2[0,+\infty)}}.
 \end{equation}
That is  $$\lambda_{0,\alpha}(c)\le \lambda_{0,\alpha'}(c).$$
Similarly, for $n\ge 1,$ and by using Lemma \ref{lem1}, we get
\begin{eqnarray*}
\lambda_{n,\alpha}(c)&\le&\max_{S_n\subset x^{\alpha-\alpha'} \mathcal B_c^{\alpha'}}\min_{f\in S_n}\frac{||f||^2_{L^2[0,1]}}{||f||^2_{L^2[0,+\infty)}}\\
                     &\le&\max_{x^{\alpha'-\alpha}S_n\subset \mathcal B_c^{\alpha'}}\min_{f\in S_n}\frac{||f||^2_{L^2[0,1]}}{||f||^2_{L^2[0,+\infty)}}\\
                     &\le&\max_{x^{\alpha'-\alpha}S_n\subset \mathcal B_c^{\alpha'}}\min_{g\in x^{\alpha'-\alpha}S_n}\frac{||x^{\alpha-\alpha'}g||^2_{L^2[0,1]}}{||x^{\alpha-\alpha'}g||^2_{L^2[0,+\infty)}}\\
                     &\le&\max_{H_n\subset \mathcal B_c^{\alpha'}}\min_{g\in H_n}\frac{||x^{\alpha-\alpha'}g||^2_{L^2[0,1]}}{||x^{\alpha-\alpha'}g||^2_{L^2[0,+\infty)}}
\end{eqnarray*}
Hence, by (\ref{2}) we get $\displaystyle\lambda_{n,\alpha}(c)\le \max_{H_n\subset HB_c^{\alpha'}}\min_{g\in H_n}\frac{||g||^2_{L^2[0,1]}}{||g||^2_{L^2[0,+\infty[}}=\lambda_{n,\alpha'}(c),$ which completes the proof of the theorem.
\end{proof}

Note that in the special case where $\alpha=\frac{1}{2},$ we have ${\displaystyle J_{1/2}(x)=\sqrt{\frac{2}{\pi x}} \sin(x)}$ and the $\varphi_{n,c}^{1/2}$ are the solutions of the eigen-problem
\begin{equation}\label{Eigenproblem1}
\sqrt{\frac{2}{\pi}} \int_0^1 \sin( cxy)\varphi_{n,c}^{1/2}(y)\, dy =\mu_{n,1/2}(c) \varphi_{n,c}^{1/2}(x), \quad x\in [0,1].
\end{equation}
Moreover, it is well known that the solutions of the previous eigen-problem are given by the  classical prolate spheroidal wave functions of odd orders $\psi_{2n+1,c}.$ These PSWFs are solutions of the  integral equations,
\begin{equation}\label{Eigenproblem2}
2i \int_0^1 \sin( cxy)\psi_{2n+1,c}(y)\, dy =\mu_{2n+1}(c) \psi_{2n+1,c}(x),
\end{equation}
\begin{equation}
\int_{-1}^1 \frac{\sin c(x-y)}{\pi (x-y)} \psi_{2n+1,c}(y)\, dy= \lambda_{2n+1}(c) \psi_{2n+1,c}(x)  \quad x\in [0,1].
\end{equation}
From the previous three equalities, one gets the following identity relating the eigenvalues of $\mathcal Q_c^{1/2}$
to the eigenvalues associated to  the classical  PSWFs of odd orders,
\begin{equation}\label{identity_eigenvals}
\lambda_{n,1/2}(c)= c |\mu_{n,1/2}(c)|^2=\frac{c}{2\pi} |\mu_{n}(c)|^2 =\lambda_{2n+1}(c), n\geq 0.
\end{equation}
Note that unlike the eigenvalues $\lambda_{n,\alpha}(c),$ the behaviour and the sharp decay rate of  eigenvalues $\lambda_n(c)$ associated with the classical PSWFs, are well known in the literature, see for example \cite{Bonami-Karoui2, Jaming-Karoui-Spektor, Landau1, Slepian4}. In particular, it has been shown  in \cite{Bonami-Karoui2} that the sharp asymptotic  decay rate of the $(\lambda_{n}(c))$ is given by
${\displaystyle e^{-2n \log\left(\frac{4n}{ec}\right)}}.$ More precisely, for any real $0< a < \frac{4}{e},$ there exists a constant $M_a$ such that ${\displaystyle \lambda_{n,c} \leq e^{-2n \log\left(\frac{an}{c}\right)}},$ for
$n\geq c M_a.$ Moreover, for any real $b > \frac{4}{e},$ there exists a constant $M_b$ such that ${\displaystyle \lambda_{n,c} \geq e^{-2n \log\left(\frac{bn}{c}\right)}},$ for
$n\geq c M_b.$ By combining the monotonicity of the $\lambda_{n,\alpha}(c)$ with respect to the parameter $\alpha,$
the identity \eqref{identity_eigenvals} and the previous decay rate of the classical eigenvalues $\lambda_{n}(c),$
one gets the following corollary that provides us with a super-exponential decay rate of the $\lambda_{n,\alpha}(c).$

\begin{corollary}
Let $c>0$ and $\alpha \geq \frac{1}{2}$ be two positive real numbers. Then for any $0<a<\frac{8}{e},$ there exits
a constant $M_a$ such that
\begin{equation}\label{Decay_Eigenvalues}
\lambda_{n,\alpha}(c) \leq e^{-4 n \log\left(\frac{ a n}{c}\right)},\quad n\geq c M_a.
\end{equation}
\end{corollary}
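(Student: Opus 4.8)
The plan is to reduce the range $\alpha \ge \tfrac12$ to the case $\alpha = \tfrac12$, which is explicitly tied to the classical prolate spheroidal wave functions, and then to borrow the sharp decay of the classical eigenvalues from \cite{Bonami-Karoui2}. First I would apply Theorem~\ref{monotony} with $\alpha' = \tfrac12$: since $\alpha \ge \tfrac12 > -\tfrac12$, it gives $\lambda_{n,\alpha}(c) \le \lambda_{n,1/2}(c)$ for every $n\ge 0$. Combining this with the identity \eqref{identity_eigenvals}, namely $\lambda_{n,1/2}(c) = \lambda_{2n+1,c}$ where $\lambda_{m,c}$ denotes the $m$-th eigenvalue associated with the classical PSWFs of bandwidth $c$, yields the key reduction
\[
\lambda_{n,\alpha}(c) \le \lambda_{2n+1,c}, \qquad n\ge 0,\ \alpha\ge\tfrac12 .
\]

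Next I would invoke the upper bound of \cite{Bonami-Karoui2}: for every $0 < a' < \tfrac4e$ there is a constant $M_{a'}$ with $\lambda_{m,c} \le e^{-2m\log(a'm/c)}$ for all $m \ge c M_{a'}$. Given $0 < a < \tfrac8e$, I set $a' = a/2$, which satisfies $0 < a' < \tfrac4e$, and apply this bound with $m = 2n+1$. For $n$ large enough that $2n+1 \ge cM_{a'}$ (e.g. $n \ge cM_{a'}$), this gives
\[
\lambda_{n,\alpha}(c) \le \lambda_{2n+1,c} \le e^{-2(2n+1)\log\big(a'(2n+1)/c\big)} .
\]
To match the target exponent it then suffices to observe that $2(2n+1) \ge 4n$ and $a'(2n+1) \ge 2a'n = an$, so that $\log\big(a'(2n+1)/c\big) \ge \log(an/c)$; as soon as $an/c \ge 1$, i.e. $n \ge c/a$, both factors are nonnegative and one obtains $2(2n+1)\log\big(a'(2n+1)/c\big) \ge 4n\log(an/c)$, hence \eqref{Decay_Eigenvalues}. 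Taking $M_a = \max\{M_{a/2},\, 1/a\}$ makes all the constraints hold simultaneously for $n \ge cM_a$.

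I do not expect a genuine obstacle here: the heavy lifting is already done by Theorem~\ref{monotony} and by \cite{Bonami-Karoui2}. The only points that need a little care are the bookkeeping around the index shift $n \mapsto 2n+1$ (which is exactly what converts the factor $2$ in the classical exponent into the factor $4$, and the threshold $\tfrac4e$ into $\tfrac8e$), and making sure the logarithm $\log(an/c)$ is nonnegative before replacing $a'(2n+1)$ and $2(2n+1)$ by the smaller quantities $an$ and $4n$, so that each replacement enlarges the exponential bound rather than shrinking it.
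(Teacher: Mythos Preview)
Your proposal is correct and follows exactly the route the paper takes: reduce to $\alpha=\tfrac12$ via Theorem~\ref{monotony}, pass to the classical PSWF eigenvalues through the identity \eqref{identity_eigenvals}, and then apply the Bonami--Karoui decay bound from \cite{Bonami-Karoui2}. You simply spell out the index-shift and constant bookkeeping that the paper leaves implicit.
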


Unfortunately and unlike the classical case, we don't have a precise asymptotic lower decay rate of the
$\lambda_{n,\alpha}(c).$ Nonetheless, the following proposition gives us a bound below for the asymptotic decay rate of the $\lambda_{n,\alpha}(c),$ with a similar type of the  super-exponential decay of the bound above.

\begin{proposition}
Let $c>0$ be a positive real number, then there exists a constant $\delta_0$ and a positive integer  $k_0$
such that for any integer ${\displaystyle n\geq \max\left(\frac{c}{2},\frac{c}{\pi}+k_0\right)}$ and $ \chi_{n,\alpha}(c) > \max\left(2\alpha^2-1/2, c^2(4\alpha^2-1)\right),$ $c^2/\chi_{n,\alpha}(c),$ we  have
\begin{equation}\label{bound1_lambda}
|\lambda_{n,\alpha}(c)| \geq \delta_0 e^{-  A (2n+\alpha +1) \log\left(\frac{\pi}{c}(n+k_0)\right)},
\end{equation}
for some positive constant $A.$
\end{proposition}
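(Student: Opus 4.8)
The plan is to estimate the mass of $\vp$ outside $[0,1]$, which by the normalization \eqref{normal} controls the eigenvalue directly: since $\int_1^{\infty}(\vp(x))^2\,dx=\frac{1}{\lambda_{n,\alpha}(c)}-1$, one has
\[
\lambda_{n,\alpha}(c)=\frac{1}{1+\int_1^{\infty}(\vp(x))^2\,dx},
\]
so it suffices to produce an upper bound $\int_1^{\infty}(\vp(x))^2\,dx\le \big(\text{poly}(\chi_{n,\alpha}(c))\big)\,e^{2\sqrt{\chi_{n,\alpha}(c)}\log(\sqrt{\chi_{n,\alpha}(c)}/c)+O(\sqrt{\chi_{n,\alpha}(c)})}$. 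On $(1,\infty)$, $\vp$ solves the equation from \eqref{Diff_Equ2}, i.e. $\big[(x^2-1)(\vp)'\big]'=\big(\chi_{n,\alpha}(c)+\frac{1/4-\alpha^2}{x^2}-c^2x^2\big)\vp$. Under the stated hypotheses (which in particular force $\chi_{n,\alpha}(c)>c^2+\alpha^2-\tfrac14$, as needed for Propositions \ref{zero_location} and \ref{Sup2_phi}) this equation has a single turning point $x_*>1$, with $x_*^2=\frac{\chi_{n,\alpha}(c)+\sqrt{\chi_{n,\alpha}(c)^2-4c^2(\alpha^2-1/4)}}{2c^2}\asymp \chi_{n,\alpha}(c)/c^2$; the equation is non‑oscillatory on $(1,x_*)$ and oscillatory on $(x_*,\infty)$, and I would treat these two ranges by two separate energy arguments in the spirit of Lemmas \ref{Sup_phi}--\ref{localestimate}.

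On the oscillatory range $(x_*,\infty)$, set $Q(x)=\frac{c^2x^2-\chi_{n,\alpha}(c)-(1/4-\alpha^2)/x^2}{x^2-1}>0$ and consider $\widetilde Q(x)=(\vp(x))^2+Q(x)^{-1}\big((\vp)'(x)\big)^2$. A computation analogous to \eqref{derivQ} shows that $\widetilde Q$ is eventually non‑increasing, a bounded neighbourhood of $x_*$ (where $\widetilde Q$ is singular) being handled by a crude ODE estimate costing only a polynomial factor in $\chi_{n,\alpha}(c)$. Moreover the Bessel asymptotics underlying \eqref{expansion2} force $\vp(x)\sim (A/x)\cos(cx+\cdots)$, hence $\widetilde Q(x)\sim A^2/x^2$ as $x\to\infty$; therefore $\int_{x_*}^{\infty}(\vp(x))^2\,dx\le \int_{x_*+\delta}^{\infty}\widetilde Q(x)\,dx+(\text{small})\ll \big(\text{poly}(\chi_{n,\alpha}(c))\big)\cdot\sup_{[1,x_*]}\big((\vp)^2+(\vp')^2\big)$.

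On the non‑oscillatory range $(1,x_*)$, I would run a Gr\"onwall argument for $E(x)=(\vp(x))^2+\frac{x^2-1}{\chi_{n,\alpha}(c)+(1/4-\alpha^2)/x^2-c^2x^2}\big((\vp)'(x)\big)^2$: differentiating and using the ODE gives $E'(x)\le \big(2\sqrt{|Q(x)|}+\rho(x)\big)E(x)$ with $\rho$ contributing only a lower‑order integral, while the initial value is harmless, $E(1)=(\vp(1))^2$ (the weight vanishes at $x=1$, $(\vp)'(1)$ being finite), and Proposition \ref{Sup2_phi} gives $(\vp(1))^2\le \tfrac{27}{2}\chi_{n,\alpha}(c)$. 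Hence $\sup_{[1,x_*]}E\le \tfrac{27}{2}\chi_{n,\alpha}(c)\exp\!\big(2\int_1^{x_*}\sqrt{|Q(x)|}\,dx+O(\sqrt{\chi_{n,\alpha}(c)})\big)$, and the substitution $x=\frac{\sqrt{\chi_{n,\alpha}(c)}}{c}u$, using $\int \frac{\sqrt{1-u^2}}{u}\,du=\sqrt{1-u^2}-\log\big|\frac{1+\sqrt{1-u^2}}{u}\big|$, yields $\int_1^{x_*}\sqrt{|Q(x)|}\,dx=\sqrt{\chi_{n,\alpha}(c)}\,\log\!\big(\sqrt{\chi_{n,\alpha}(c)}/c\big)+O(\sqrt{\chi_{n,\alpha}(c)})$. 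Combining the two ranges gives $\int_1^{\infty}(\vp)^2\le \big(\text{poly}(\chi_{n,\alpha}(c))\big)\,e^{2\sqrt{\chi_{n,\alpha}(c)}\log(\sqrt{\chi_{n,\alpha}(c)}/c)+O(\sqrt{\chi_{n,\alpha}(c)})}$, hence $\lambda_{n,\alpha}(c)\ge \delta_0\,e^{-A_0\sqrt{\chi_{n,\alpha}(c)}\,\log(\sqrt{\chi_{n,\alpha}(c)}/c)}$ for suitable absolute $\delta_0,A_0>0$.

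It remains to convert this into the stated form. The condition $n\ge \frac{c}{\pi}+k_0$ makes $\chi_{n,\alpha}(c)>c^2+\alpha^2-\tfrac14$, so the Sturm comparison used in Proposition \ref{zero_location} applies, and counting the zeros of $\vp$ on $(0,1)$ (there are exactly $n$ of them) gives $\sqrt{\chi_{n,\alpha}(c)-(\alpha^2-1/4)}\le \pi(n+\tfrac\alpha2)\le \pi(n+k_0)$, i.e. $\sqrt{\chi_{n,\alpha}(c)}\le \pi(n+k_0)$ and $\log(\sqrt{\chi_{n,\alpha}(c)}/c)\le \log(\tfrac{\pi}{c}(n+k_0))$; together with $\sqrt{\chi_{n,\alpha}(c)}\ge 2n+\alpha+\tfrac12$ this turns the exponent above into $A\,(2n+\alpha+1)\log(\tfrac{\pi}{c}(n+k_0))$ for an absolute constant $A$ (of order $2\pi$), and the polynomial prefactors are absorbed at the cost of enlarging $A$ slightly and shrinking $\delta_0$. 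The main obstacle is the non‑oscillatory estimate near $x=1$: there $|Q(x)|\sim \frac{\chi_{n,\alpha}(c)-c^2+1/4-\alpha^2}{2(x-1)}$ blows up, so one must verify that $\int_1^{\cdot}\sqrt{|Q|}$ and $\int_1^{\cdot}\rho$ stay finite and that the Gr\"onwall rate is genuinely $2\sqrt{|Q|}$ up to terms whose integral is $O(\sqrt{\chi_{n,\alpha}(c)})$; the passage through the turning point $x_*$ (where both $\widetilde Q$ and the weight in $E$ degenerate) is the other delicate point, and must be carried out so as to cost only a polynomial factor in $\chi_{n,\alpha}(c)$.
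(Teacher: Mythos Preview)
Your approach is correct in outline but genuinely different from the paper's. The paper does not analyse $\vp$ on $(1,\infty)$ at all. Instead it uses the identity
\[
\partial_c\bigl(\log\lambda_{n,\alpha}(c)\bigr)=\frac{(\vp(1))^2}{c}
\]
(a consequence of Slepian's differential equation for $\mu_{n,\alpha}(c)$) and integrates in the bandwidth variable from $c$ up to $c_n=\pi(n+k_0)$. The starting value $\lambda_{n,\alpha}(c_n)\ge\delta_0$ is supplied by an external result of Abreu--Bandeira on Landau-type density for the Hankel transform; this is where the specific constants $k_0,\delta_0$ in the statement come from. The integrand is then bounded using the WKB approximation of Karoui--Mehrzi, which gives the sharper estimate $(\vp(1))^2=O(\sqrt{\chi_{n,\alpha}(c)})=O(2n+\alpha+1)$, and the $\log(\tfrac{\pi}{c}(n+k_0))$ factor simply comes from $\int_c^{c_n}\tau^{-1}\,d\tau$.

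What your route buys is self-containment: you only need the paper's own Proposition~\ref{Sup2_phi} (the weaker bound $(\vp(1))^2=O(\chi_{n,\alpha}(c))$ suffices for you since it enters merely as a polynomial prefactor), and you avoid invoking Abreu's theorem and the external WKB result. The price is the ODE analysis on $(1,\infty)$, with its two singular points. Your worry about $x=1$ is in fact harmless: with $p=x^2-1$, $q=\chi_{n,\alpha}(c)-c^2x^2+(1/4-\alpha^2)/x^2$ and $E=\varphi^2+(p/q)(\varphi')^2$, one computes $E'=4\varphi\varphi'-\frac{(pq)'}{q^2}(\varphi')^2$, and near $x=1$ one has $(pq)'\to 2q(1)>0$, so the second term is \emph{negative} there and the Gr\"onwall rate is just $2\sqrt{q/p}$, which is integrable. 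The genuine work is at the turning point $x_*$, where both $E$ and your $\widetilde Q$ blow up; a standard Airy-type connection argument (or an Olver-style Liouville--Green error bound) handles this at polynomial cost in $\chi_{n,\alpha}(c)$, as you anticipate, but it does need to be written out. The paper's argument sidesteps all of this by never leaving $x=1$.
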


\noindent
{\bf Proof:} It is well known, see \cite{Slepian3} that $\mu_{n,\alpha}(c)$ satisfies the differential equation,
$$\frac{\partial \mu_{n,\alpha}(c)}{\partial c} = \frac{\mu_{n,\alpha}(c)}{2c } \left((\varphi^{(\alpha)}_{n,c}(1))^2 -1\right).$$
Here, we recall that $\varphi_{n,c}$ is normalized so that $\|\varphi_{n,c}\|_{L^2(0,1)}=1.$ It can be easily checked that in this case, $\lambda_{n,\alpha}(c)$ satisfies
\begin{equation}\label{lambda_n}
\frac{\partial }{\partial c}\left(\log(\lambda_{n,\alpha}(c))\right) = \frac{(\varphi^{(\alpha)}_{n,c}(1))^2}{c}.
\end{equation}
On the other hand, from \cite{Abreu}, there exists a positive  integer $k_0$ and a positive real number $\delta_0$ such that
$$\lambda_{\left[ \frac{c}{\pi}\right]-k_0,\alpha}(c) \geq \delta_0.$$
Since the $(\lambda_{n,\alpha}(c))_n$ are arranged in the decreasing order, then the previous inequality implies that
$\lambda_{n,\alpha}(c) \geq \delta_0$ for any integer $n\leq \left[ \frac{c}{\pi}\right]-k_0,$ or any $c\geq c_n=\pi (n+k_0).$
  Also, by using \eqref{lambda_n}, one gets
\begin{equation}\label{lower_bound}
\lambda_{n,\alpha}(c) = \lambda_{n,\alpha}(c_n) \exp\left(-\int_c^{c_n} \frac{(\varphi^{(\alpha)}_{n,\tau}(1))^2}{\tau}\right)\, d\tau \geq \delta_0
\exp\left(-\int_c^{c_n} \frac{(\varphi^{(\alpha)}_{n,\tau}(1))^2}{\tau}\right)\, d\tau.
\end{equation}
On the other hand, it has been shown in \cite{Karoui-Mehrzi} that for $\alpha \geq \frac{1}{2}$ and $ \chi_{n,\alpha}(c) > \max\left(2\alpha^2-1/2, c^2(4\alpha^2-1)\right),$ the  WKB uniform approximation of the $\vp$ is given by
\begin{equation}\label{Uniform_Approx}
\sup_{x\in [\gamma_n,1]}\left|\varphi^{(\alpha)}_{n,c}(x)- \frac{A_n \chi_n^{1/4}\sqrt{S_n(x)} J_0(\sqrt{\chi_n} S_n(x))}{(1-x^2)^{1/4} r_n(x)^{1/4}}\right|
\leq \frac{C_{q_n}}{\sqrt{\chi_n}},
\end{equation}
where $A_n$ is a normalization constant, $C_{q_n}$ is a constant depending only on $q_n=c^2/\chi_{n,\alpha}(c)<1,$
$\gamma_n=\sqrt{\frac{2\alpha^2-1/2}{\chi_{n,\alpha}(c)}},$ and
$$r_n(t)=1-q_n t^2-\frac{\alpha^2-1/4}{t^2 \chi_{n,\alpha}(c)},\quad
S_n(x)=\int_x^1 \sqrt{\frac{r_n(t)}{1-t^2}}\, dt.$$
Also, from \cite{Karoui-Mehrzi}, we know that in the neighbourhood of $x=1,$ the quantity  ${\displaystyle \frac{\sqrt{S_n(x)} J_0(\sqrt{\chi_n} S_n(x))}{(1-x^2)^{1/4} r_n(x)^{1/4}}}$ is bounded uniformly in  $n.$ Moreover, by using the same techniques as those used in \cite{Bonami-Karoui3} for the approximation of the normalization constant 
 appearing  in the WKB approximation of the classical PSWFs, one concludes that our normalization constants $A_n$ are also bounded uniformly in $n$  as soon as $q_n=c^2/\chi_{n,\alpha}(c) \leq \widetilde q <1.$ Consequently, if we also assume that $n\geq c/2,$
 then using the previous analysis together with the upper bound of $\chi_{n,\alpha}(c),$ given by \eqref{bounds_chi}, one concludes that there exists a constant $B$ such that
 \begin{equation}\label{bounds_varphi}
 (\vp(1))^2 \leq B \sqrt{\chi_{n,\alpha}(c)}\leq  \sqrt{2} B \cdot (2n+\alpha+1).
 \end{equation}
It is easy to see that the previous inequality is still valid for any $c_n=\pi (n+k_0)\leq \tau \leq c.$ Finally, by substituting  $c$ with $\tau$ in \eqref{bounds_varphi} and using \eqref{lower_bound}, one gets the desired
result \eqref{bound1_lambda}.

As a consequence of the previous proposition, we have the following corollary showing the super-exponential decay rate of the   $|\mu_{n,\alpha}|$    does not invalidate
an exponential decay of the expansion coefficients $(d_k^n)_k,$ given by \eqref{Eq1.9}.

\begin{corollary}
Under the hypotheses on the integer $n,$ given by the previous proposition, there exist two positive constants $A, M$ such that for any 
integer $k\geq n,$ we have
\begin{equation}
\label{decay_coeff}
|d_k^n|\leq \dfrac{M}{\sqrt{c \pi (2k+\alpha+/2)}} \exp\left({-(2k+\alpha+1)\log\left(\frac{4k+2\alpha+2}{e c}\right)+A n \log\left(\frac{\pi n}{c}\right)}\right).
\end{equation}
\end{corollary}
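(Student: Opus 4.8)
The plan is to start from the integral representation \eqref{Eq1.9} of the expansion coefficient,
\[
 d_k^n=\dfrac{(-1)^k\sqrt{2(2k+\alpha+1)}}{\mu_{n,\alpha}(c)}\int_{0}^{1}\vp(y)\,\dfrac{J_{2k+\alpha+1}(y)}{\sqrt{cy}}\,dy,
\]
and to bound the three ingredients on the right-hand side separately: the reciprocal eigenvalue $1/\mu_{n,\alpha}(c)$, the $L^2$-size of $\vp$ on $[0,1]$, and the decay in $k$ of the Bessel integral $\int_0^1 \vp(y) J_{2k+\alpha+1}(y)\,y^{-1/2}\,dy$. For the eigenvalue factor I would use $\lambda_{n,\alpha}(c)=c|\mu_{n,\alpha}(c)|^2$ together with the lower bound \eqref{bound1_lambda} of the preceding proposition, which gives $1/|\mu_{n,\alpha}(c)| \leq \sqrt{c/\delta_0}\, e^{\tfrac12 A(2n+\alpha+1)\log(\pi(n+k_0)/c)}$; absorbing $k_0$ into the constant $A$ (legitimate under the stated hypotheses on $n$) this is $\ll e^{A n \log(\pi n/c)}$ up to constants, which matches the second exponential in \eqref{decay_coeff}. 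Since $\vp$ is normalized by \eqref{normal} so that $\|\vp\|_{L^2(0,1)}=1$, that factor contributes only a constant.

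The heart of the argument is the decay in $k$ of the Bessel integral. The key facts are the classical uniform bound $|J_\nu(y)|\le \dfrac{(y/2)^\nu}{\Gamma(\nu+1)}$ valid for $0\le y$ and $\nu\ge 0$ — which for $y\in[0,1]$ and $\nu=2k+\alpha+1$ gives $|J_{2k+\alpha+1}(y)|\le \dfrac{(1/2)^{2k+\alpha+1}}{\Gamma(2k+\alpha+2)}$ — combined with Cauchy–Schwarz on $[0,1]$ against $\vp$ (using $\int_0^1 \vp(y)^2\,dy=1$ and that $y^{-1/2}$ introduces only a harmless constant factor after the $\sqrt{cy}$ is accounted for). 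This yields
\[
 \Bigl|\int_0^1 \vp(y)\,\dfrac{J_{2k+\alpha+1}(y)}{\sqrt{cy}}\,dy\Bigr|\;\le\; \dfrac{C}{\sqrt{c}}\;\dfrac{(1/2)^{2k+\alpha+1}}{\Gamma(2k+\alpha+2)}.
\]
Now apply Stirling's formula $\Gamma(2k+\alpha+2)\ge c' \bigl(\tfrac{2k+\alpha+1}{e}\bigr)^{2k+\alpha+1}$ (a lower bound suffices). Together with the prefactor $\sqrt{2(2k+\alpha+1)}$ from \eqref{Eq1.9}, a short computation collects the $k$-dependent terms into
\[
 \dfrac{(1/2)^{2k+\alpha+1}}{(2k+\alpha+1)^{2k+\alpha+1}}\,e^{2k+\alpha+1}
 \;=\;\exp\!\Bigl(-(2k+\alpha+1)\log\bigl(\tfrac{4k+2\alpha+2}{e}\bigr)\Bigr),
\]
and reintroducing the bandwidth $c$ (the Bessel argument on $[0,c]$ after rescaling, equivalently tracking the factors of $c$ carefully through \eqref{Eq1.8}) turns this into $\exp\!\bigl(-(2k+\alpha+1)\log\bigl(\tfrac{4k+2\alpha+2}{ec}\bigr)\bigr)$, which is exactly the first exponential in \eqref{decay_coeff}; the surviving polynomial factor $\sqrt{2k+\alpha+1}$ in the numerator is dominated by $1/\sqrt{c\pi(2k+\alpha+\frac12)}$ up to the constant $M$.

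Multiplying the three bounds and renaming constants gives \eqref{decay_coeff}. The main obstacle — and the step I would be most careful with — is the precise bookkeeping of the factors of $c$: the integral equation \eqref{integral_eq1} has Bessel argument $cxy$ while \eqref{Eq1.9} has argument $y$, so one must either rescale consistently or invoke \eqref{Eq1.8} and \eqref{Eigenvals1} to make sure the $c$ lands inside the logarithm as $\log((4k+2\alpha+2)/(ec))$ rather than merely as an overall power of $c$. A secondary point is justifying that the hypotheses of the preceding proposition (which restrict $n$) indeed let us absorb the shift $k_0$ and the constant $\delta_0$ into $A$ and $M$ uniformly in $k\ge n$; this is routine once one notes $n\log(\pi n/c)$ is monotone in $n$ in the relevant range.
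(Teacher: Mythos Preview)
Your approach is essentially the paper's: start from the integral representation \eqref{Eq1.9}, apply the pointwise Bessel bound $|J_\nu(x)|\le \dfrac{(x/2)^\nu}{\Gamma(\nu+1)}$, use H\"older/Cauchy--Schwarz against the normalized $\vp$, invoke Stirling for $\Gamma(2k+\alpha+2)$, and finally insert the lower bound \eqref{bound1_lambda} on $|\mu_{n,\alpha}(c)|$ via $\lambda_{n,\alpha}(c)=c|\mu_{n,\alpha}(c)|^2$. You are also right that the argument of the Bessel function in \eqref{Eq1.9} must be $cy$ (not $y$; this is a typo in the displayed formula, compare \eqref{expansion2} and \eqref{Eq1.8}), and that this is exactly what places the $c$ inside the logarithm.

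There is one small slip. By passing to the uniform bound $|J_{2k+\alpha+1}(y)|\le (1/2)^{2k+\alpha+1}/\Gamma(2k+\alpha+2)$ before integrating, you discard the factor $y^{2k+\alpha+1}$; the remaining $\sqrt{2(2k+\alpha+1)}$ prefactor then only cancels against the $\sqrt{2\pi(2k+\alpha+1)}$ coming from Stirling, leaving an $O(1)$ constant rather than the claimed $1/\sqrt{2k+\alpha+3/2}$ (a factor $\sqrt{2k+\alpha+1}$ in the numerator is certainly not ``dominated by'' $1/\sqrt{2k+\alpha+1/2}$). The paper instead keeps the pointwise bound $|J_{2k+\alpha+1}(cy)|\le \dfrac{(cy)^{2k+\alpha+1}}{2^{2k+\alpha+1}\Gamma(2k+\alpha+2)}$, so that the integrand becomes $|\vp(y)|\,y^{2k+\alpha+1/2}$, and then H\"older gives the additional factor $\big(\int_0^1 y^{4k+2\alpha+1}\,dy\big)^{1/2}=(4k+2\alpha+2)^{-1/2}$, which is precisely what produces the $(2k+\alpha+3/2)^{-1/2}$ in \eqref{decay_coeff}. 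With that one adjustment your argument matches the paper's line for line.
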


\noindent
{\bf Proof:}  We first note that the Bessel function satisfies the following bound, see for example \cite{Andrews},
${\displaystyle |J_{\alpha}(x)|\leq \dfrac{|x|^\alpha}{2^\alpha \Gamma(\alpha+1)} \,\,\, \forall \alpha > \dfrac{-1}{2}.}$
Here, $\Gamma$ denotes the Gamma function.
By combining  \eqref{Eq1.9} and the previous inequality, one obtains
\begin{eqnarray*}
|d_k^n| &\leq& \dfrac{\sqrt{2(2k+\alpha+1)}}{|\mu_{n,\alpha}(c)|}  \int_{0}^{1}{|\vp(y)|  \dfrac{|cy|^{2k+\alpha+1}}{2^{2k+\alpha+1} \Gamma(2k+\alpha+2) \sqrt{cy}} dy}\\
&\leq& \dfrac{\sqrt{2(2k+\alpha+1)}c^{2k+\alpha+\dfrac{1}{2}}}{|\mu_{n,\alpha}(c)|2^{2k+\alpha+1} \Gamma(2k+\alpha+2)}  \int_{0}^{1}{|\vp(y)|  y^{2k+\alpha+ \dfrac{1}{2}} dy}\\
&\leq& \dfrac{\sqrt{2(2k+\alpha+1)}c^{2k+\alpha+\dfrac{1}{2}}}{|\mu_{n,\alpha}(c)|2^{2k+\alpha+1} \Gamma{(2k+\alpha+2})(2k+\alpha+\dfrac{3}{2})}.
\end{eqnarray*}
The last inequality follows from the H\"older's inequality applied to the integral ${\displaystyle \int_{0}^{1}{|\vp(y)|  y^{2k+\alpha+ \dfrac{1}{2}} dy}.}$
On the other hand, it is well known that $\Gamma(s+1)\geq \sqrt{2\pi}  s^{s+\dfrac{1}{2}} \exp{(-s)}.$ Consequently, we have
\begin{equation}
|d_k^n|\leq\frac{\sqrt{2}}{|\mu_{n,\alpha}|}\dfrac{1}{\sqrt{c \pi (2k+\alpha+3/2)}} e^{-(2k+\alpha+1)\log\left(\frac{4k+2\alpha+2}{e c}\right)}.
\end{equation}
Finally, by combining the previous inequality and \eqref{bound1_lambda} and taking into account that
$\lambda_{n,\alpha}(c)= c |\mu_{n,\alpha}(c)|^2,$ one gets the desired inequality \eqref{decay_coeff}.

\section{Numerical Results}

In  this paragraph, we give some numerical examples that illustrate the various results of the previous sections. Moreover, we show that the eigenfunctions of the finite Hankel transform operator are well adapted for the approximation of Hankel- and almost Hankel Band-limited functions.\\

\noindent{\bf Example 1:} In this example, we illustrate one of the main results of this work, which is given by Theorem \ref{monotony}. That is the eigenvalues $\lambda_{n,\alpha}(c)$ are decreasing with respect to the parameter $\alpha.$ For this purpose, we have considered the values of $c=10 \pi$ and the four values of $\alpha=0, 1,2, 3.$ Then, we have used formula \eqref{Eigenvals1} and computed highly accurate approximation of the eigenvalues  $\lambda_{n,\alpha}(c)$ with $0\leq n \leq 40.$ In Figure 1(a), we have plot the graphs of the significant eigenvalues $\lambda_{n,\alpha}(c)$
with the various values of $n$ and $\alpha.$ In order to check that the decay with respect to the parameter $\alpha$ holds also for the very small eigenvalues, we have plot in Figure 1(b) the graphs of the $\log(\lambda_{n,\alpha}(c)).$
Note that the results given by the previous figures indicate what was expected by Theorem \ref{monotony},  that is
the $\lambda_{n,\alpha}(c)$ are decreasing with respect to the parameter $\alpha.$

\begin{figure}[h]\hspace*{0.05cm}
{\includegraphics[width=15.05cm,height=5.5cm]{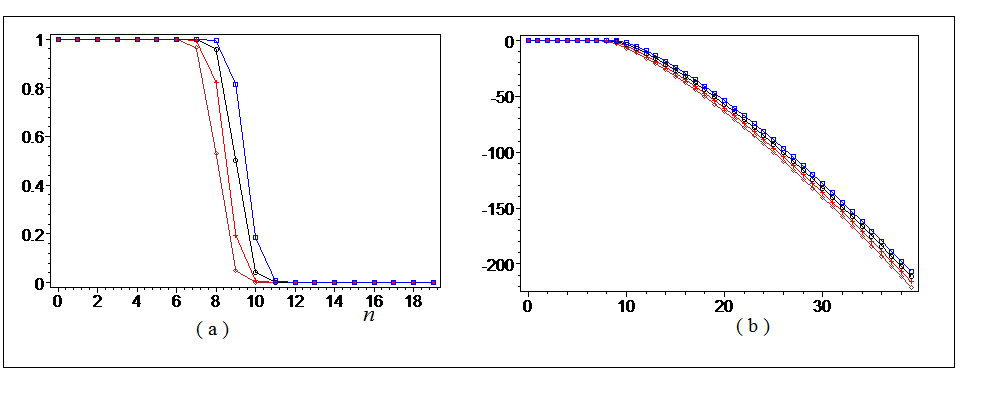}}
\vskip -0.5cm\hspace*{1cm} \caption{(a) Graph of the $\lambda_{n,\alpha}(c)$ for $c=10\pi,$ $\alpha=0$ (blue), 1(black), 2 (red), 3(brown),$\quad$ (b) same as (a) with the graphs of  $\log\left(\lambda_{n,\alpha}(c)\right).$}
\end{figure}

\vskip 0.25cm
\noindent{\bf Example 2:} In this example, we give some numerical tests that illustrate the super-exponential decay rate of the eigenvalues $\lambda_{n,\alpha}(c),$ given by Corollary 1. For this purpose, we have considered the value of $\alpha=1$ and the three different values of $c=5\pi, 10\pi, 15\pi,$  and computed highly accurate values of the eigenvalues $\lambda_{n,\alpha}(c),$ for ${\displaystyle   n\geq \frac{\sqrt{c^2+\alpha^2-1/4}}{\pi}+\frac{5}{3}.}$ By Theorem 2, these values of $n$ correspond to the case where $\chi_{n,\alpha}(c)\geq c^2+\alpha^2-\frac{1}{4}.$
As in the classical case, the critical value of $n=n_c=\frac{\sqrt{c^2+\alpha^2-1/4}}{\pi}$ corresponds to the beginning
of the plunge region of the eigenvalues $\lambda_{n,\alpha}(c).$ In Figure 2, we plot the graphs of the highly accurate values of the
$\log(\lambda_{n,\alpha}(c)),$ as well as  the graphs of $-4 n \log\left(\frac{ 8 n}{e c}\right),$ the logarithm of  the optimal theoretical super-exponential decay rate, as given by corollary 1. Note that for the different values of $c,$
the theoretical  asymptotic decay rate given by corollary 1 is very close to the actual decay rate.

\begin{figure}[h]\hspace*{3.05cm}
{\includegraphics[width=11cm,height=6.5cm]{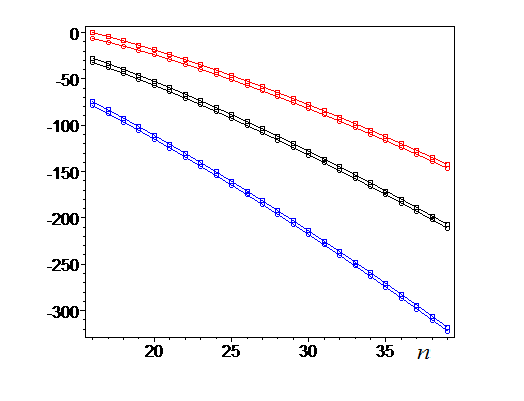}}
\vskip -0.5cm\hspace*{1cm} \caption{(a) Graph of the $\log(\lambda_{n,\alpha}(c))$ for $\alpha=1,$ $c=5\pi$ (blue circles), $c= 10 \pi$ (black circles), $c=15 \pi$ (red circles), versus the corresponding  graphs of       $-4 n \log\left(\frac{ 8 n}{e c}\right)$ (boxes).}
\end{figure}

\vskip 0.5cm
\noindent{\bf Example 3:} In this last example, we illustrate the quality of the spectral approximation of the Hankel band-limited and almost Hankel band-limited functions, by the orthogonal projection over $\mbox{Span}\{\vp , \,\, 0\leq n\leq N\}.$ Note that the concept of almost band-limited functions has been introduced in the framework of the classical Fourier transform
by Landau, see \cite{Landau1}. In a similar manner, the almost Hankel band-limited functions are defined as follows.

\begin{definition}
Let $\Omega$ be a measurable set of $\mathbb R_+$ and let $\epsilon_{\Omega}>0$ be a positive real number. A function
$f\in L^2(0,+\infty)$ is said to be $\epsilon_{\Omega}-$almost band-limited to $\Omega$ if
\begin{equation}
\| \mathcal H^{\alpha} f - \chi_{\Omega}\mathcal H^{\alpha} f\|_{L^2(0,+\infty)}\leq \epsilon_{\Omega}.
\end{equation}
Here $\chi_{\Omega}$ denotes the characteristic function of $\Omega.$
\end{definition}

In particular, since the $\vp$ are Hankel $c-$band-limited functions, then they are $0-$almost band-limited to $\Omega=[0,c].$ Next, for an integer $N\geq 1,$ let $S^{\alpha}_{N}(f)$ be the $N$-th partial sum of the  expansion of $f,$ in the basis $\{\vp,\,\, n\geq 0\},$ that is
\begin{equation}\label{eqqq6.1}
S^{\alpha}_N f (x) = \sum_{n=0}^N <f,\vp>_{L^2(0,1)}\vp(x),
 \end{equation}
 where $<\cdot,\cdot>$ denotes the usual inner product of $L^2(0,1).$ The quality of approximation  of the classical Fourier almost  $c-$band-limited functions, by the classical PSWFs has been given in \cite{Bonami-Karoui4}. Moreover, in \cite{Jaming-Karoui-Spektor}, this quality of approximation has been extended to the expansion with respect to some families of classical orthogonal polynomials. By straightforward modifications  of the techniques used in  \cite{Jaming-Karoui-Spektor}, one gets the following proposition that provides
us with the quality of approximation of almost Hankel $c-$band-limited function by the $\vp.$
\begin{proposition} If $f$ is an $L^2(0,+\infty)$ function that is
$\epsilon_{\Omega}-$ almost Hankel band-limited in $\Omega=[0, c],$ then
for any positive integer $N,$ we have
\begin{equation}\label{Approx1}
\left( \int_{0}^{+1}|f(t)- S^{\alpha}_N f(t)|^2 dt\right)^{1/2} \leq  \left(\epsilon_{\Omega}+\sqrt{\lambda_{N,\alpha}(c)}\right)\, \|f\|_{L^2(0,+\infty)}.
 \end{equation}
\end{proposition}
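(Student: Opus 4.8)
The plan is to adapt to the Hankel setting the classical Landau--type argument for approximating almost band-limited functions by prolate spheroidal wave functions, in the spirit of \cite{Jaming-Karoui-Spektor}. The first step is to split $f$ into an exactly Hankel band-limited part plus a small correction: set $g=\mathcal H^{\alpha}\big(\chi_{[0,c]}\,\mathcal H^{\alpha}f\big)$ and $h=f-g$. Since the Hankel transform is an involutive isometry of $L^2(0,+\infty)$, we have $\mathcal H^{\alpha}g=\chi_{[0,c]}\,\mathcal H^{\alpha}f$, so that $g\in\mathcal B_c^{\alpha}$, and by the Plancherel identity $\|h\|_{L^2(0,+\infty)}=\|\mathcal H^{\alpha}f-\chi_{\Omega}\mathcal H^{\alpha}f\|_{L^2(0,+\infty)}\le\epsilon_{\Omega}$, while $\|g\|_{L^2(0,+\infty)}\le\|f\|_{L^2(0,+\infty)}$.

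Next, I would use that $\mathcal Q_c^{\alpha}$ is a compact, positive and injective operator on $L^2(0,1)$ (injectivity following from the analyticity of $y\mapsto\mathcal H_c^{\alpha}f(y)$), so that its eigenfunctions $\{\vp\}_{n\ge0}$ form an orthonormal basis of $L^2(0,1)$; hence $S^{\alpha}_N f$ is precisely the orthogonal projection of $f|_{(0,1)}$ onto $\mathrm{span}\{\vp:\,0\le n\le N\}$ and
\[
\int_0^1|f(t)-S^{\alpha}_N f(t)|^2\,dt=\sum_{n>N}\big|\langle f,\vp\rangle_{L^2(0,1)}\big|^2 .
\]
Writing $\langle f,\vp\rangle_{L^2(0,1)}=\langle g,\vp\rangle_{L^2(0,1)}+\langle h,\vp\rangle_{L^2(0,1)}$ and applying the triangle inequality in $\ell^2$, it suffices to bound the two tail sums separately. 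For the remainder term, Bessel's inequality in $L^2(0,1)$ gives at once $\sum_{n>N}|\langle h,\vp\rangle_{L^2(0,1)}|^2\le\|h\|^2_{L^2(0,1)}\le\|h\|^2_{L^2(0,+\infty)}\le\epsilon_{\Omega}^2$.

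The main point is the estimate of the band-limited part, which rests on the transfer identity
\[
\langle g,\vp\rangle_{L^2(0,1)}=\lambda_{n,\alpha}(c)\,\langle g,\vp\rangle_{L^2(0,+\infty)},\qquad g\in\mathcal B_c^{\alpha}.
\]
I would prove this exactly as the corresponding relation in the classical PSWF theory: the analytic extension \eqref{expansion2} of $\vp$ satisfies $\mathcal H^{\alpha}\vp(\xi)=\big(c\,\mu_{n,\alpha}(c)\big)^{-1}\vp(\xi/c)\,\chi_{[0,c]}(\xi)$, so $\vp\in\mathcal B_c^{\alpha}$; combined with the kernel computation of section~2 and \eqref{kernel} (which shows that, under $u\mapsto\mathcal H^{\alpha}(u)(c\,\cdot)$, the form $\langle\cdot,\cdot\rangle_{L^2(0,1)}$ on $\mathcal B_c^{\alpha}$ becomes $c\,\langle\mathcal Q_c^{\alpha}\cdot,\cdot\rangle_{L^2(0,1)}$ while $\langle\cdot,\cdot\rangle_{L^2(0,+\infty)}$ becomes $c\,\langle\cdot,\cdot\rangle_{L^2(0,1)}$) together with $\mathcal Q_c^{\alpha}\vp=\lambda_{n,\alpha}(c)\vp$, the identity follows. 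Granting it, and using that $(\lambda_{n,\alpha}(c))_n$ is nonincreasing together with the normalization $\|\vp\|^2_{L^2(0,+\infty)}=1/\lambda_{n,\alpha}(c)$ from \eqref{normal}, I obtain
\[
\sum_{n>N}\big|\langle g,\vp\rangle_{L^2(0,1)}\big|^2=\sum_{n>N}\lambda_{n,\alpha}(c)^2\big|\langle g,\vp\rangle_{L^2(0,+\infty)}\big|^2\le\lambda_{N,\alpha}(c)\sum_{n>N}\frac{\big|\langle g,\vp\rangle_{L^2(0,+\infty)}\big|^2}{\|\vp\|^2_{L^2(0,+\infty)}}\le\lambda_{N,\alpha}(c)\,\|g\|^2_{L^2(0,+\infty)},
\]
the last inequality being Bessel's inequality in $L^2(0,+\infty)$ for the orthogonal system $\{\vp\}_n$. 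Since $\|g\|_{L^2(0,+\infty)}\le\|f\|_{L^2(0,+\infty)}$, collecting the two bounds yields $\big(\int_0^1|f-S^{\alpha}_N f|^2\big)^{1/2}\le\epsilon_{\Omega}+\sqrt{\lambda_{N,\alpha}(c)}\,\|f\|_{L^2(0,+\infty)}$, which gives \eqref{Approx1}.

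I expect the only genuinely delicate point to be the justification of the transfer identity: checking that the extension \eqref{expansion2} of $\vp$ is indeed Hankel band-limited to $[0,c]$, that the family $\{\vp\}_n$ remains orthogonal in $L^2(0,+\infty)$, and that the Fubini and change-of-variable steps behind the section~2 kernel computation are legitimate. Everything else is the standard band-limited/remainder splitting combined with Bessel's inequality.
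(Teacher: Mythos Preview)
Your argument is correct and is precisely the adaptation of the Jaming--Karoui--Spektor band-limited/remainder splitting that the paper explicitly invokes (the paper gives no detailed proof, only the reference). One small remark: with Definition~1 as written (namely $\|\mathcal H^{\alpha}f-\chi_{\Omega}\mathcal H^{\alpha}f\|\le\epsilon_{\Omega}$ rather than $\le\epsilon_{\Omega}\|f\|$), your computation actually yields $\epsilon_{\Omega}+\sqrt{\lambda_{N,\alpha}(c)}\,\|f\|_{L^2(0,+\infty)}$, not $(\epsilon_{\Omega}+\sqrt{\lambda_{N,\alpha}(c)})\|f\|_{L^2(0,+\infty)}$; this discrepancy is an inconsistency in the paper's statement/definition, not a flaw in your proof.
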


\begin{figure}[h]\hspace*{1.05cm}
{\includegraphics[width=15cm,height=4.5cm]{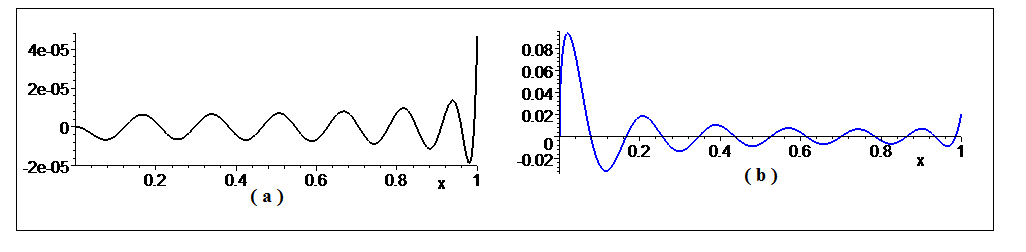}}
\vskip -0.5cm\hspace*{1cm} \caption{(a) Graph of the approximation error $f_1(x)-S^{\alpha_1}_N f_1$ for $\alpha_1=3/2,$ $c=10\pi,$ $N=11,$  (b) Graph of the approximation error $f_2(x)-S^{\alpha_2}_N f_2$ for $\alpha_2=1,$ $c=10\pi,$ $N=11.$ }
\end{figure}

To illustrate the previous spectral approximation result, we have considered the following Hankel and almost Hankel band-limited functions, given by
$$f_1(x)= \frac{J_{\alpha_1+1}(a x)}{\sqrt{x}},\,\alpha_1=\frac{3}{2},\, a=20;\qquad\qquad f_2(x)= x^{\alpha_2-1/2}\exp(-x),\,\, \alpha_2=1,$$ respectively. Note that the Hankel transforms of $f_1$ and $f_2$ are given by
$$\mathcal H^{\alpha_1}(f_1)(s)=a^{-\alpha_1-1}s^{\alpha_1+1/2} \mathbf 1_{[0,a]}(s),\qquad \mathcal H^{\alpha_2}(f_2)(s)= \frac{2^{\alpha_2}\Gamma(\alpha_2+1/2)}{\sqrt{\pi}}\frac{s^{\alpha_2+1/2}}{(1+s^2)^{\alpha_2+1/2}}.$$
Hence, $f_1\in B_c^{\alpha_1},$ for any $c\geq a.$ Moreover, straightforward computations show that 
$f_2$ is $\epsilon_{\Omega}-$concentrated on $\Omega=[0,c],$ with 
$$\epsilon_{\Omega}=\frac{2^{\alpha_2}\Gamma(\alpha_2+1/2)}{\sqrt{\pi}}\frac{\sqrt{1+2c^2}}{2 (1+c^2)^2}.$$
In the special case where $\alpha_2=1$ and $c=10\pi,$ we have $\epsilon_{\Omega}\approx 0.0225.$ For this last value of $c=10 \pi,$ we have computed the $N$-th partial sum $S^{\alpha_1}_N f_1$ and $S^{\alpha_2}_N f_2,$ with $N=11.$
The approximation errors $f_1(x)-S^{\alpha_1}_N f_1$ and $f_2(x)-S^{\alpha_2}_N f_2$ are given in Figure 3(a) and 3(b), respectively. Note that as predicted by the previous proposition, the first approximation error is proportional to 
$\sqrt{\lambda_{N,\alpha_1}(c)},$ and the second one is proportional to $\epsilon_{\Omega}.$

\end{document}